\def\mycmd{1}
\newcommand{\argmax}[1]{\underset{#1}{\operatorname{arg}\,\operatorname{max}}\;}
\newcommand{\argmin}[1]{\underset{#1}{\operatorname{arg}\,\operatorname{min}}\;}
\newcommand{\mtrx}[1]{\mathbf{#1}}
\newcommand{\gmtrx}[1]{\bm{#1}}
\newcommand{\me}{\mathrm{e}}
\newcommand{\mi}{\mathrm{i}}
\newcommand{\ssm}{\textsc{ssm}tool\xspace}
\newcommand{\coco}{\textsc{coco}\xspace}
\newcommand{\matlab}{\textsc{matlab}\xspace}
\newcommand{\norm}[1]{\left\lVert#1\right\rVert}
\renewcommand*\env@matrix[1][c]{\hskip -\arraycolsep
  \let\@ifnextchar\new@ifnextchar
  \array{*\c@MaxMatrixCols #1}}
\journalname{Nonlinear Dynamics}
\begin{document}

\title{Exact Model Reduction and Fast Forced Response Calculation in High-Dimensional Nonlinear Mechanical Systems}
\titlerunning{Exact Model Red. and Fast FRC Calculation in High-Dim. Nonlinear Mech. Systems}   
\author{S. Ponsioen \and G. Haller}
\institute{S. Ponsioen \and G. Haller (\Letter) \at
              Institute for Mechanical Systems, 
              ETH Z{\"u}rich, 
              Leonhardstrasse 21, 
              8092, Z{\"u}rich, Switzerland \\
              \email{georgehaller@ethz.ch}
}
\date{\today}
\maketitle

\begin{abstract}
We show how spectral submanifold (SSM) theory can be used to extract forced-response curves, including isolas, without any numerical simulation in high-degree-of-freedom, periodically forced mechanical systems. We use multivariate recurrence relations to construct the SSMs,  achieving a major speed-up relative to earlier autonomous SSM algorithms. The increase in computational efficiency promises to close the current gap between studying lower-dimensional academic examples and analyzing larger systems obtained from finite-element modeling, as we illustrate on a discretization of a damped-forced beam model.  

\keywords{Spectral submanifolds \and Model-order reduction \and Forced response curves}
\end{abstract}
\section{Introduction}
Determining the forced response curve (FRC) of a multi-degree-of-freedom nonlinear mechanical system under periodic forcing is one of the most common tasks in structural engineering, providing key insights into the nonlinear behavior of the system. Specifically, the FRC gives the amplitude of the periodic response of the system as a function of the frequency of the periodic forcing. This, in turn, provides valuable information about expected material stresses and strains that arise in the system under various external forcing conditions. The nonlinear FRC often differs significantly from the FRC of the linear part of the system, possibly containing also unexpected isolated response branches (isolas).  

For low-dimensional mechanical systems, the steady-state response can simply be obtained by numerically integrating the equations of motion. However, mechanical models constructed by finite-element packages generally contain thousands of degrees of freedom. This high dimensionality, coupled with typically low damping and costly function evaluations, may result in excessively long integration times (up to days or weeks) until a steady-state response is reached. 

To overcome this obstacle, one often reduces high-dimensional systems to lower-dimensional models whose FRCs can be faster extracted. Virtually all model-reduction techniques in use involve projecting the full dynamics to a lower-dimensional subspace. Examples include the static condensation method, also known as the Guyan-Irons reduction method (Guyan \cite{guyan1965reduction} and Irons \cite{irons1965structural}; cf. G{\'e}radin and Rixen \cite{Geradin2014}), the Craig-Bampton method \cite{craig1968coupling} and the proper orthogonal decomposition method \cite{Karhunen1946, Kosambi1943, Loeve1948, Obukhov1954, Pougachev1953}. These methods are generally applied without any a priori knowledge about the errors arising from the lack of invariance of the subspace involved in the projection. Similarly unclear is the error arising from the nonlinear method of modal derivatives \cite{idelsohn1985reduction}, which formally restricts the full system into an envisioned quadratic surface in the configuration space. 
Haller and Ponsioen \cite{haller2017exact} showed that only under restrictive conditions can the static-condensation and modal-derivative techniques be justified as first- and second-order local approximations to an invariant manifold to which the full mechanical system can indeed be exactly reduced. 

A more recent reduction method, proposed by Haller and Ponsioen \cite{Haller2016}, uses spectral submanifold (SSM) theory to reduce the full dynamics to exactly invariant SSM surfaces in the phase space. SSMs are the unique, smoothest, nonlinear continuations of spectral subspaces of the linearized, unforced limit of a mechanical system. SSM theory can be applied to nonlinear, damped mechanical systems with no forcing, periodic forcing or quasi-periodic forcing. As shown by \cite{ponsioen2018automated,ponsioen2019analytic,Jain2018,Breunung2017,Szalai2017,kogelbauer2018rigorous}, the reduced dynamics on a two-dimensional SSM serves as an exact, one-degree-of-freedom reduced-order model, that can be constructed for any particular vibration mode of interest. 

Once a reduced model has been obtained by any method, it is typically interrogated for a reduced forced response. A broadly used method for this analysis is the harmonic balance (HB) method, introduced first by Kryloff and Bogoliuboff \cite{kryloff1947introduction} for a single-harmonic approximation. The HB method assumes that the system has a steady-state periodic solution, which can therefore be represented by a Fourier series. By substituting the assumed solution into the original ordinary differential equations and keeping only finitely many harmonics, one obtains a set of nonlinear algebraic equations for the unknown Fourier coefficients. The HB method can also be coupled to a continuation scheme in order to obtain the forced response over a forcing frequency domain of interest (cf. von Groll and Ewins \cite{von2001harmonic}  and Cochelin and Vergez \cite{cochelin2009high}). While conceptually simple, the HB method also has several shortcomings. First, it requires a large number of nonlinear algebraic equations to be solved, and hence becomes ineffective in higher degrees of freedom. Second, the solvability of these equations for a few harmonics does not imply that a periodic orbit actually exists. Indeed, there are documented examples of systems, such as those with quadratic nonlinearities, for which the HB has been found not to work well \cite{MickensR}. More recently, Breunung and Haller \cite{breunung2019when} constructed mechanical examples in which the HB method indicates the existence of a periodic response even though no periodic orbits exist in the system. Finally, the HB method provides no information about the stability of the periodic orbit that it approximates. 

As alternatives to the HB method, several computational methods exist in the time domain for finding periodic solutions. Among these, the shooting method (cf. Peeters et al. \cite{Peeters2009}, Slater \cite{slater1996numerical}, Roberts and Shipman \cite{roberts1972two}) solves a two-point boundary value problem to compute a steady-state solution of a periodically forced system. An initial guess, representing an initial position on the periodic orbit, is corrected by solving the equation of variations, which can be evaluated using a numerical finite-difference method by perturbing each of the initial conditions and integrating the full system. Similar to the HB method, the shooting method can be coupled to a path continuation technique to obtain the forced response curve. 

To avoid numerical integration of the full system, a collocation method can be used to solve for the full periodic solution at once. This is done by approximating a periodic solution of the full system as a continuous function of time, expressed on a predefined number of time intervals as a polynomial of a certain degree, parameterized by unknown base points (see Dankowicz and Schilder \cite{Dankowicz2013}). Collocation methods, however, have generally not been applied to large systems due to their significant memory needs.

In the recent work of Jain et al. \cite{jain2018fast}, an integral-equation approach is proposed for the fast computation of the steady-state response of (quasi-) periodically forced nonlinear systems by finding the zeros of an integral equation using a Picard and Newton–Raphson iteration method. A major advantage of this approach compared to the classical shooting method is its ability to handle quasi-periodic forcing. The integral equation approach also gives increased speed over other numerical continuation methods by exploiting the special structure of weakly nonlinear mechanical vibrations. Still, for higher degrees of freedom, even this increased speed can lead to calculations that are simply too big to be practical. 

 
In contrast to all these prior approaches, here we use the reduced dynamics on a two-dimensional SSM to extract the forced-response curve around a particular mode of interest. By doing so, we extend the work of Ponsioen et al. \cite{ponsioen2018automated}, who developed a \matlab-based computational tool (\ssm) for computing two-dimensional SSMs in arbitrary autonomous mechanical systems, to the non-autonomous setting. The present work also builds on the approach of Breunung and Haller \cite{Breunung2017}, who compute the non-autonomous part of the SSM up to zeroth order in appropriate coordinates in which the SSM-reduced dynamics simplifies to a normal form. 

The reduced dynamics on each two-dimensional SSM provides us with two differential equations. The fixed points of the two-dimensional SSM-reduced system correspond to periodic orbits on the FRC for a particular forcing frequency. These fixed points can be instantaneously computed, irrespective of the dimensionality of the original mechanical system. The stability of the corresponding periodic orbits can directly be obtained from the eigenvalues of the linearized reduced system at its fixed points. As a consequence, all periodic responses, including isolas, and their stability can be identified from a procedure in which the only numerical step in the end is the identification of the zeros of a two-dimensional autonomous vector field. A simple \matlab implementation is now available for this procedure\footnote{\ssm is available at: \url{www.georgehaller.com.}}, allowing the user to apply SSM-based model reduction and forced-response calculations to systems with high degrees of freedom. We illustrate this by locating forced responses in a forced-damped beam, considering discretizations of this nonlinear system up to 10,000 degrees of freedom. We also present speed comparisons with the collocation and the HB methods up to the limits of applicability of those methods.

\section{System set-up }
We consider $n$-degree-of-freedom, periodically forced mechanical systems of the form
\begin{gather}
\mtrx{M}\ddot{\mtrx{y}}+\mtrx{C}\dot{\mtrx{y}}+\mtrx{K}\mtrx{y}+\mtrx{g}(\mtrx{y},\dot{\mtrx{y}})=\varepsilon \mtrx{f}(\Omega t),\quad 0\leq\varepsilon\ll 1, \label{eq:mech_sys} \\ 
\mtrx{g}(\mtrx{y},\dot{\mtrx{y}})=\mathcal{O}\left(\left|\mtrx{y}\right|^{2},\left|\mtrx{y}\right|\left|\dot{\mtrx{y}}\right|,\left|\dot{\mtrx{y}}\right|^{2}\right), \nonumber 
\end{gather}
where $\mtrx{y}\in\mathbb{R}^{n}$ is the generalized position vector; $\mtrx{M}=\mtrx{M}^{T}\in\mathbb{R}^{n\times n}$ is the positive definite mass matrix; $\mtrx{C}=\mtrx{C}^{T}\in\mathbb{R}^{n\times n}$ is the damping matrix; $\mtrx{K}=\mtrx{K}^{T}\in\mathbb{R}^{n\times n}$ is the stiffness matrix and $\mtrx{g}(\mtrx{y},\dot{\mtrx{y}})$ contains all the nonlinear terms in the system, which are assumed to be analytic. The external forcing $\varepsilon \mtrx{f}(\Omega t)$ does not depend on the positions and velocities. 

We transform system (\ref{eq:mech_sys}) into a set of $2n$ first-order ordinary differential equations by introducing the change of variables $\mtrx{x}_{1}=\mtrx{y}$, $\mtrx{x}_{2}=\dot{\mtrx{y}}$, with $\mtrx{x}=(\mtrx{x}_{1},\mtrx{x}_{2})\in\mathbb{R}^{2n}$, which gives
\begin{align}
\dot{\mtrx{x}} & =
\left(\begin{array}{cc}
\mtrx{0} & \mtrx{I}\\
-\mtrx{M}^{-1}\mtrx{K} & -\mtrx{M}^{-1}\mtrx{C}
\end{array}\right)\mtrx{x}
+\left(\begin{array}{c}
\mtrx{0}\\
-\mtrx{M}^{-1}\mtrx{g}(\mtrx{x}_{1},\mtrx{x}_{2})
\end{array}\right)
+ \varepsilon\left(
\begin{array}{c}
\mtrx{0}\\
\mtrx{M}^{-1}\mtrx{f}(\Omega t)
\end{array}
\right) \nonumber \\
&=\mtrx{A}\mtrx{x}+\mtrx{G}_\text{p}(\mtrx{x})+
\varepsilon \mtrx{F}_\text{p}(\Omega t).\label{eq:dyn_sys}
\end{align}
System (\ref{eq:dyn_sys}) has a fixed point at $\mtrx{x}=\mtrx{0}$ under zero forcing ($\varepsilon=0$). Additionally, we observe that $\mtrx{M}^{-1}$ is well-defined because $\mtrx{M}$ is assumed positive definite. 

The linearized part of system (\ref{eq:dyn_sys}) is 
\begin{equation}
\dot{\vec{x}}=\vec{A}\vec{x},\label{eq:lin_dyn_sys}
\end{equation}
where the matrix $\vec{A}$ has $2n$ eigenvalues $\lambda_{k}\in\mathbb{C}$
for $k=1,\ldots,2n$. Counting multiplicities, we sort these eigenvalues
based on their real parts in the decreasing order

\begin{equation}
\text{\text{Re}}(\lambda_{2n})\leq\text{\text{Re}}(\lambda_{2n-1})\leq\ldots\leq\text{\text{Re}}(\lambda_{1})<0,
\end{equation}
assuming that the real part of each eigenvalue is less than zero and
hence the fixed point of Eq. (\ref{eq:lin_dyn_sys}) is asymptotically stable. We further assume that the constant matrix $\vec{A}$ is semisimple, and hence the algebraic multiplicity, $\text{alg}(\lambda_{k})$, is equal to the geometric multiplicity of each eigenvalue $\lambda_k$ of $\mtrx{A}$. We can, therefore, identify $2n$ linearly independent eigenvectors
$\vec{v}_{k}\in\mathbb{C}^{2n}$, with $k=1,\ldots,2n$, each spanning a
real eigenspace $E_{k}\subset\mathbb{R}^{2n}$ with $\text{dim}(E_{k})=2\times\text{alg}(\lambda_{k})$
in case $\text{Im}(\lambda_{k})\neq0$, or $\text{dim}(E_{k})=\text{alg}(\lambda_{k})$
in case $\text{Im}(\lambda_{k})=0$.

\section{Non-autonomous SSMs for continuous mechanical systems \label{sec:non_auto_red}}
As the matrix $\mtrx{A}$ is semisimple, the linear part of system (\ref{eq:dyn_sys}) is diagonalized by a linear change of coordinates $\mtrx{x}=\mtrx{T}\mtrx{q}$, with $\mtrx{T}=\left[\mtrx{v}_{1},\mtrx{v}_{2},\ldots,\mtrx{v}_{2n}\right]\in\mathbb{C}^{2n\times2n}$ and $\mtrx{q}\in\mathbb{C}^{2n}$, yielding
\begin{gather}
\dot{\mtrx{q}}=\underbrace{\text{diag}(\lambda_{1},\lambda_{2}\ldots,\lambda_{2n})}_{\vec{\Lambda}}\mtrx{q}+\mtrx{G}_\text{m}(\mtrx{q})+\varepsilon\mtrx{F}_\text{m}(\Omega t).\label{eq:ds_diag}
\end{gather}
We consider the two-dimensional modal subspace $\mathcal{E}=\text{span}\left\{ \vec{v}_{1},\vec{v}_{2}\right\} \subset\mathbb{C}^{2n}$ with $\vec{v}_{2}=\bar{\vec{v}}_{1}.$ The remaining linearly independent eigenvectors $\vec{v}_{3},\ldots,\vec{v}_{2n}$ span a complex subspace $\mathcal{C}\subset\mathbb{C}^{2n}$ such that the full phase space of (\ref{eq:ds_diag}) can be expressed as the direct sum
\begin{equation}
\mathbb{C}^{2n}=\mathcal{E}\oplus\mathcal{C}.
\end{equation}
We write the diagonal matrix $\vec{\Lambda}$ as
\begin{equation}
\vec{\Lambda}=\left[\begin{array}{cc}
\vec{\Lambda}_{\mathcal{E}} & 0\\
0 & \vec{\Lambda}_{\mathcal{C}}
\end{array}\right],\quad\text{Spect}\left(\vec{\Lambda}_{\mathcal{E}}\right)=\left\{ \lambda_{1},\lambda_{2}\right\} ,\quad\text{Spect}\left(\vec{\Lambda}_{\mathcal{C}}\right)=\left\{ \lambda_{3},\ldots,\lambda_{2n}\right\} ,\label{eq:lin_decomp}
\end{equation}
with $\vec{\Lambda}_{\mathcal{E}}=\text{diag}(\lambda_{1},\lambda_{2})$
and $\vec{\Lambda}_{\mathcal{C}}=\text{diag}(\lambda_{3},\ldots,\lambda_{2n})$.

Following Haller and Ponsioen \cite{Haller2016}, we now define a \textit{non-autonomous spectral submanifold} (SSM), $\mathcal{W}(\mathcal{E}, \Omega t)$, corresponding to the spectral subspace $\mathcal{E}$ of $\vec{\Lambda}$ as a two-dimensional invariant manifold of the dynamical system (\ref{eq:ds_diag}) that is $\frac{2\pi}{\Omega}$-periodic in time and
\begin{itemize}
\item [{(i)}] Perturbs smoothly from $\mathcal{E}$ at the trivial fixed point $\mtrx{q}=\mtrx{0}$ under the addition of the $\mathcal{O}(\varepsilon)$ terms in Eq. (\ref{eq:ds_diag}). 
\item [{(ii)}] Is strictly smoother than any other $\frac{2\pi}{\Omega}$-periodic invariant manifold satisfying (i).
\end{itemize}
We also define the \textit{absolute spectral quotient} $\Sigma(\mathcal{E})$ of $\mathcal{E}$ as the positive integer 
\begin{equation}
\Sigma(\mathcal{E})=\text{Int}\left[\frac{\min_{\lambda\in\text{Spect}(\vec{\Lambda})}\text{Re}\lambda}{\max_{\lambda\in\text{Spect}(\vec{\Lambda}_{\mathcal{E}})}\text{Re}\lambda}\right]\in\mathbb{N}^{+}.\label{eq:abs_spect_quo}
\end{equation}
Additionally, we introduce the non-resonance conditions 
\begin{equation}
a\text{Re}\lambda_{1}+b\text{Re}\lambda_{2}\neq\text{Re}\lambda_{l},\quad\forall\lambda_{l}\in\text{Spect}(\vec{\Lambda}_{\mathcal{C}}),\quad 2\leq a+b\leq\Sigma(\mathcal{E}),\quad a,b\in\mathbb{N}. \label{eq:ext_res}
\end{equation}
We now restate the following result from Haller and Ponsioen \cite{Haller2016} on the existence of an SMM in system (\ref{eq:ds_diag}).
\begin{theorem}\label{thrm:SSM} 
Under the non-resonance conditions (\ref{eq:ext_res}), the following hold for system (\ref{eq:ds_diag}):
\begin{itemize}
\item [{(i)}] There exists a unique two-dimensional, time-periodic, analytic SSM, $\mathcal{W}(\mathcal{E}, \Omega t)$ that depends smoothly on the parameter $\epsilon$.
\item [{(ii)}]$\mathcal{W}(\mathcal{E})$ can be viewed as an embedding of an open set $\mathcal{U}$ into the phase space of system (\ref{eq:ds_diag}) via the map
\begin{equation}
\vec{W}(\mtrx{s},\phi):\mathcal{U}\subset\mathbb{C}^{2}\times S^1\rightarrow\mathbb{C}^{2n},\label{eq:W_map}
\end{equation}
with the phase variable $\phi \in S^1$. We can approximate $\mtrx{W}(\mtrx{s},\phi)$ in a neighborhood of the origin using a Taylor expansion in the parameterization coordinates $\mtrx{s}=(s_1,s_2=\bar{s}_1)$, with coefficients that depend periodically on the phase variable $\phi$.
\item [{(iii)}] There exists a polynomial function $\vec{R}(\mtrx{s},\phi):\mathcal{U}\rightarrow\mathcal{U}$
satisfying the invariance relationship 
\upshape
\begin{equation}
\mtrx{\Lambda} \mtrx{W}(\mtrx{s},\phi)+\mtrx{G}_\text{m}(\mtrx{W}(\mtrx{s},\phi))+\varepsilon\mtrx{F}_\text{m}(\phi)=D_\mtrx{s}\mtrx{W}(\mtrx{s},\phi)\mtrx{R}(\mtrx{s},\phi)+D_{\phi}\mtrx{W}(\mtrx{s},\phi)\Omega,\label{eq:invar}
\end{equation}
\itshape
such that the reduced dynamics on the SSM can be expressed as
\begin{equation}
\dot{\mtrx{s}}=\mtrx{R}(\mtrx{s},\phi). \label{eq:map_R}
\end{equation}
\end{itemize}
\end{theorem}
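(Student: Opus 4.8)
The plan is to reduce the statement to an invariant-manifold existence-and-uniqueness problem for an autonomous system and then to solve the invariance equation (\ref{eq:invar}) constructively through a parameterization of the manifold. First I would \emph{autonomize} the periodically forced system (\ref{eq:ds_diag}) by appending the phase as an extra state with trivial dynamics, $\dot{\phi}=\Omega$, so that the extended system
\[
\dot{\mtrx{q}}=\mtrx{\Lambda}\mtrx{q}+\mtrx{G}_\text{m}(\mtrx{q})+\varepsilon\mtrx{F}_\text{m}(\phi),\qquad \dot{\phi}=\Omega,
\]
becomes autonomous on $\mathbb{C}^{2n}\times S^1$. In this extended space the object to be constructed, $\mathcal{W}(\mathcal{E},\Omega t)$, appears as a genuine invariant manifold, and its $\tfrac{2\pi}{\Omega}$-periodicity is automatic from the $S^1$ factor. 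For $\varepsilon=0$ the extended system has the invariant set $\mathcal{W}_0(\mathcal{E})\times S^1$, where $\mathcal{W}_0(\mathcal{E})$ is the \emph{autonomous} SSM tangent to $\mathcal{E}$ at the origin; its existence, uniqueness and analyticity under the spectral non-resonance conditions is exactly the autonomous SSM result of Haller and Ponsioen \cite{Haller2016}, which in turn rests on the parameterization method of Cabr\'e, Fontich and de la Llave for invariant manifolds of analytic flows.

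Statement (i) for $\varepsilon>0$ then follows by persistence. Because all eigenvalues satisfy $\operatorname{Re}\lambda_k<0$ while $\mathcal{E}$ carries the \emph{slowest} decay rates $\operatorname{Re}\lambda_1=\operatorname{Re}\lambda_2$, the ratio encoded by the absolute spectral quotient $\Sigma(\mathcal{E})$ in (\ref{eq:abs_spect_quo}) quantifies a relative spectral gap between the tangential and normal contraction rates. This gap makes $\mathcal{W}_0(\mathcal{E})\times S^1$ a normally attracting, $r$-normally hyperbolic manifold for every $r\le\Sigma(\mathcal{E})$, so it persists under the $\mathcal{O}(\varepsilon)$ perturbation in extended space as a nearby $C^r$ (indeed analytic) invariant manifold depending smoothly on $\varepsilon$. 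For (ii) I would realize the persisted manifold as the image of an embedding $\mtrx{W}(\mtrx{s},\phi)$ whose first two arguments parameterize $\mathcal{E}$ and whose coefficients are $\tfrac{2\pi}{\Omega}$-periodic in $\phi$; the reality structure $s_2=\bar{s}_1$, inherited from $\mtrx{v}_2=\bar{\mtrx{v}}_1$, guarantees that $\mtrx{W}$ maps into the real phase space.

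To obtain (iii) I would substitute the parameterization $\mtrx{W}(\mtrx{s},\phi)$ and the reduced vector field $\mtrx{R}(\mtrx{s},\phi)$ into (\ref{eq:ds_diag}); the chain-rule identity $\tfrac{d}{dt}\mtrx{W}(\mtrx{s}(t),\phi(t))=D_{\mtrx{s}}\mtrx{W}\,\mtrx{R}+D_\phi\mtrx{W}\,\Omega$ then produces precisely the invariance equation (\ref{eq:invar}). I would solve it order by order in the multi-degree $(a,b)$ of $\mtrx{s}$, expanding both $\mtrx{W}$ and $\mtrx{R}$ as Taylor series in $\mtrx{s}$ with $\phi$-periodic (Fourier) coefficients. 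At each order the $\mathcal{C}$-components of the unknown coefficient of $\mtrx{W}$ satisfy a linear \emph{homological} equation whose invertibility is governed by the numbers $a\lambda_1+b\lambda_2-\lambda_l$ together with the integer multiples of $\mi\Omega$ arising from the $\phi$-harmonics; the $\mathcal{E}$-resonant monomials cannot be removed and are instead collected into $\mtrx{R}$, which is therefore polynomial as claimed. The zeroth-order-in-$\mtrx{s}$, $\mathcal{O}(\varepsilon)$ balance reproduces the leading non-autonomous correction computed by Breunung and Haller \cite{Breunung2017}.

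The non-resonance conditions (\ref{eq:ext_res}) are exactly what makes each homological equation solvable up to degree $\Sigma(\mathcal{E})$, so the main obstacle --- and the crux of the argument --- is to show that these conditions are simultaneously sufficient for the formal series to close with an invertible operator at every admissible order \emph{and} for the resulting manifold to be the unique \emph{smoothest} one rather than merely one member of a family of finitely smooth invariant manifolds tangent to $\mathcal{E}$. The delicate point is that here normal hyperbolicity holds only in the relative (dominated-splitting) sense, since $\mathcal{W}_0(\mathcal{E})\times S^1$ is attracting rather than of saddle type; uniqueness of the smoothest continuation therefore hinges on the integer $\Sigma(\mathcal{E})$ pinning down the exact finite differentiability class beyond which any competing invariant manifold must lose smoothness. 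Controlling the genuine convergence of the Taylor series, and not merely its asymptotic validity, is the final technical step, and it is precisely here that the analytic parameterization machinery underlying \cite{Haller2016} is indispensable.
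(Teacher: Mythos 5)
Your proposal does not correspond to the paper's own proof, because the paper offers no proof at all beyond a citation: Theorem \ref{thrm:SSM} is an explicit restatement of the main theorem of Haller and Ponsioen \cite{Haller2016}, itself resting on the parameterization results of Cabr\'e et al. \cite{Cabre2003,Cabre2003b,Cabre2005}, and the paper's ``proof'' consists of saying exactly that. So the relevant question is whether your independent derivation stands on its own. Its computational half does: autonomizing via $\dot{\phi}=\Omega$, obtaining the invariance relation (\ref{eq:invar}) from the chain rule, and solving homological equations degree by degree with the resonant monomials absorbed into $\mtrx{R}$ is precisely the structure the paper exploits in Section \ref{app:construct_ssm} and Appendices A--B. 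The existence--uniqueness half, however, contains a genuine gap.

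The gap is in the persistence step. For the attracting invariant manifold $\mathcal{W}_0(\mathcal{E})\times S^1$, $r$-normal hyperbolicity requires the \emph{weakest} normal contraction to dominate $r$ times the tangential contraction, i.e. $\left|\text{Re}\,\lambda_3\right|>r\left|\text{Re}\,\lambda_1\right|$, whereas the absolute spectral quotient (\ref{eq:abs_spect_quo}) involves the \emph{strongest} decay rate, $\Sigma(\mathcal{E})=\text{Int}\left[\left|\text{Re}\,\lambda_{2n}\right|/\left|\text{Re}\,\lambda_1\right|\right]$; the latter can be arbitrarily large while $\left|\text{Re}\,\lambda_3\right|/\left|\text{Re}\,\lambda_1\right|=1$, so ``$r$-normally hyperbolic for every $r\le\Sigma(\mathcal{E})$'' is false in general. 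Worse, the theorem's hypotheses allow $\text{Re}\,\lambda_3=\text{Re}\,\lambda_1$ (the eigenvalue ordering permits it, and the non-resonance conditions (\ref{eq:ext_res}) only constrain $2\le a+b$, hence are compatible with this equality whenever $\text{Re}\,\lambda_1\neq0$): in that case there is no spectral gap at all, $\mathcal{W}_0(\mathcal{E})\times S^1$ is not even $1$-normally hyperbolic, and Fenichel persistence cannot be invoked --- yet the SSM still exists, because the parameterization method needs only non-resonance, not a gap. This is exactly why the underlying theory is built on Cabr\'e et al. rather than on normal hyperbolicity. Finally, even where a gap does hold, Fenichel theory yields only finitely smooth ($C^r$) persisting manifolds; it never delivers analyticity, nor uniqueness within the class of invariant manifolds smoother than the quotient-determined differentiability class --- which is precisely the content of claims (i)--(ii). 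You concede this in your closing paragraph by deferring to ``the analytic parameterization machinery underlying \cite{Haller2016}''; but applying that machinery to the forced system \emph{is} the theorem being proved, so at that point your argument collapses to the one-line citation the paper itself gives, with the normal-hyperbolicity scaffolding doing no work.
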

\begin{proof}:
We have simply restated the main theorem by Haller and Ponsioen \cite{Haller2016}, which is based on the more abstract results of Cabré et al. \cite{Cabre2003,Cabre2003b,Cabre2005} for mappings on Banach spaces. \qed
\end{proof}
In the upcoming sections, we will explain how to construct non-autonomous SSMs and show that the fixed points of the reduced dynamics represent limit cycles in the full phase space. These limit cycles, in turn, each correspond to points on the FRC for a particular forcing frequency. 

\section{Non-autonomous SSM computation \label{app:construct_ssm}}
By the smooth dependence of the SSM on $\varepsilon$, we can write
\begin{align}
\mtrx{W}(\mtrx{s},\phi)&=\mtrx{W}_0(\mtrx{s})+\varepsilon\mtrx{W}_1(\mtrx{s},\phi)+\mathcal{O}(\varepsilon^2), \label{eq:SSM_exp}\\
\mtrx{R}(\mtrx{s},\phi)&=\mtrx{R}_0(\mtrx{s})+\varepsilon\mtrx{R}_1(\mtrx{s},\phi)+\mathcal{O}(\varepsilon^2).\label{eq:RED_exp}
\end{align}
We now substitute Eqs. (\ref{eq:SSM_exp})-(\ref{eq:RED_exp}) into the invariance Eq. (\ref{eq:invar}) and collect terms of equal order in $\varepsilon$. Given that $\mtrx{G}_\text{m}(\mtrx{q})=\mathcal{O}(\left|\mtrx{q}\right|^2)$, we can Taylor-expand $\mtrx{G}_\text{m}(\mtrx{W}(\mtrx{s},\phi))$ around $\varepsilon=0$, to obtain
\begin{equation}
\mtrx{G}_\text{m}(\mtrx{W}(\mtrx{s},\phi)) = \mtrx{G}_\text{m}(\mtrx{W}_0(\mtrx{s}))+\varepsilon D_\mtrx{q}\mtrx{G}_\text{m}(\mtrx{W}_0(\mtrx{s}))\mtrx{W}_1(\mtrx{s},\phi)+\mathcal{O}(\varepsilon^2).
\end{equation}
\subsection{The autonomous coefficient equations}
Collecting terms of $\mathcal{O}(1)$ in Eq. (\ref{eq:invar}), we obtain the coefficient equations for the autonomous part of the SSM:
\begin{equation}
\gmtrx{\Lambda} \mtrx{W}_0(\mtrx{s}) + \mtrx{G}_\text{m}(\mtrx{W}_0(\mtrx{s})) = D_\mtrx{s} \mtrx{W}_0(\mtrx{s})\mtrx{R}_0(\mtrx{s}).\label{eq:auto_SSM}
\end{equation}
The autonomous part of the SSM and the reduced dynamics, which have previously been derived from an expansion in $\varepsilon$, are in turn Taylor expanded in the parameterization coordinates $\mtrx{s}$, which we explicitly express as 
\begin{gather}
\mtrx{W}_0(\mtrx{s}) =
\begin{bmatrix}
w_1^0(\mtrx{s}) \\
\vdots \\
w_{2n}^0(\mtrx{s}) 
\end{bmatrix},\quad w_i^0(\mtrx{s})=\sum_{\mtrx{m}}W_{i,\mtrx{m}}^0\mtrx{s}^\mtrx{m},  \\
\mtrx{R}_0(\mtrx{s}) =
\begin{bmatrix}
r_1^0(\mtrx{s}) \\
r_2^0(\mtrx{s}) 
\end{bmatrix},\quad r_i^0(\mtrx{s})=\sum_{\mtrx{m}}R_{i,\mtrx{m}}^0\mtrx{s}^\mtrx{m},
\end{gather}
with the multi-index notation $\mtrx{m}\in\mathbb{N}_0^{2}$. 
\begin{theorem}\label{thrm:coef_eq_auto}
The coefficient equation related to the $\textbf{k}^\text{th}$-power term of the $i^\text{th}$ row of the autonomous invariance Eq. (\ref{eq:auto_SSM}), for $|\mtrx{k}|>2$, is equal to
\begin{equation}
\left(\lambda_i-\sum_{j=1}^{2}k_j\lambda_j\right)W_{i,\mtrx{k}}^0 = \sum_{j=1}^{2}\delta_{ij}R_{j,\mtrx{k}}^0 + Q_{i,\mtrx{k}}, \label{eq:coef_EQ_W0}
\end{equation}
where $Q_{i,\mtrx{k}}$ can be written as 
\begin{align}
& \sum_{j=1}^{2}\sum_{\substack{\mtrx{m}\leq\tilde{\mtrx{k}}_j \\ \mtrx{m}\neq \mtrx{e}_j\\ \mtrx{m}\neq\mtrx{k}_{\text{ }} \\ m_j>0}} m_j W_{i,\mtrx{m}}^0 R_{j,\tilde{\mtrx{k}}_j-\mtrx{m}}^0-\left[g_i(\mtrx{W}_0(\mtrx{s}))\right]_\mtrx{k}. \nonumber
\end{align}
\end{theorem}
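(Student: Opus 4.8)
The plan is to prove the claim by direct substitution of the Taylor expansions of $\mtrx{W}_0$ and $\mtrx{R}_0$ into the autonomous invariance equation (\ref{eq:auto_SSM}), reading off the $i^\text{th}$ row, and then matching the coefficient of the monomial $\mtrx{s}^\mtrx{k}$ on both sides. The left-hand side is immediate: the $i^\text{th}$ component of $\gmtrx{\Lambda}\mtrx{W}_0(\mtrx{s})$ contributes $\lambda_i W_{i,\mtrx{k}}^0$ to the $\mtrx{s}^\mtrx{k}$-coefficient, while the nonlinear term contributes $\left[g_i(\mtrx{W}_0(\mtrx{s}))\right]_\mtrx{k}$, the coefficient of $\mtrx{s}^\mtrx{k}$ in the composition $g_i\circ\mtrx{W}_0$. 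Since $\mtrx{G}_\text{m}$ is $\mathcal{O}(|\mtrx{q}|^2)$ and $\mtrx{W}_0$ starts at linear order, this latter term only involves coefficients $W_{i,\mtrx{m}}^0$ with $|\mtrx{m}|<|\mtrx{k}|$, so it is known once lower-order terms have been computed.

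The core of the computation is the right-hand side $D_\mtrx{s}\mtrx{W}_0(\mtrx{s})\mtrx{R}_0(\mtrx{s})$, whose $i^\text{th}$ component is $\sum_{j=1}^{2}\partial_{s_j}w_i^0(\mtrx{s})\,r_j^0(\mtrx{s})$. Differentiating the series for $w_i^0$ lowers the exponent by $\mtrx{e}_j$ and brings down a factor $m_j$, so that
\[
\sum_{j=1}^{2}\partial_{s_j}w_i^0\,r_j^0=\sum_{j=1}^{2}\sum_{\mtrx{m}}\sum_{\mtrx{n}} m_j W_{i,\mtrx{m}}^0 R_{j,\mtrx{n}}^0\,\mtrx{s}^{\mtrx{m}-\mtrx{e}_j+\mtrx{n}}.
\]
Matching the exponent to $\mtrx{k}$ forces $\mtrx{n}=\mtrx{k}+\mtrx{e}_j-\mtrx{m}$, i.e. $\mtrx{n}=\tilde{\mtrx{k}}_j-\mtrx{m}$ with the shifted index $\tilde{\mtrx{k}}_j:=\mtrx{k}+\mtrx{e}_j$; nonnegativity of $\mtrx{n}$ gives the summation range $\mtrx{m}\leq\tilde{\mtrx{k}}_j$, and the surviving factor $m_j$ enforces $m_j>0$. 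Thus the $\mtrx{s}^\mtrx{k}$-coefficient of the right-hand side is $\sum_{j=1}^{2}\sum_{\mtrx{m}\leq\tilde{\mtrx{k}}_j,\,m_j>0} m_j W_{i,\mtrx{m}}^0 R_{j,\tilde{\mtrx{k}}_j-\mtrx{m}}^0$.

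It remains to peel off the two distinguished terms that carry the unknowns $W_{i,\mtrx{k}}^0$ and $R_{j,\mtrx{k}}^0$. The term $\mtrx{m}=\mtrx{e}_j$ produces $W_{i,\mtrx{e}_j}^0 R_{j,\mtrx{k}}^0$; because the SSM is tangent to $\mathcal{E}=\mathrm{span}\{\mtrx{v}_1,\mtrx{v}_2\}$ in the diagonalizing coordinates, the linear part of the parameterization is the inclusion, so $W_{i,\mtrx{e}_j}^0=\delta_{ij}$ and this contribution equals $\sum_{j}\delta_{ij}R_{j,\mtrx{k}}^0$. The term $\mtrx{m}=\mtrx{k}$ produces $k_j W_{i,\mtrx{k}}^0 R_{j,\mtrx{e}_j}^0$; since the linearized reduced dynamics on $\mathcal{E}$ is $\dot{s}_j=\lambda_j s_j$, we have $R_{j,\mtrx{e}_j}^0=\lambda_j$ (and the off-diagonal linear coefficients vanish), so summing over $j$ gives $\left(\sum_{j}k_j\lambda_j\right)W_{i,\mtrx{k}}^0$. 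Moving this second contribution to the left-hand side to combine with $\lambda_i W_{i,\mtrx{k}}^0$, and moving $\left[g_i(\mtrx{W}_0)\right]_\mtrx{k}$ to the right, yields exactly (\ref{eq:coef_EQ_W0}) with $Q_{i,\mtrx{k}}$ equal to the residual sum over $\mtrx{m}\neq\mtrx{e}_j,\mtrx{k}$ minus $\left[g_i(\mtrx{W}_0)\right]_\mtrx{k}$.

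The only real subtlety, and the step worth checking carefully, is the multi-index bookkeeping in the last paragraph: one must verify that $\mtrx{e}_j$ and $\mtrx{k}$ are genuinely distinct indices so that the two special terms can be separated cleanly without double counting. This is guaranteed precisely by the hypothesis $|\mtrx{k}|>2$, since $|\mtrx{e}_j|=1$. As a consistency check I would also confirm that the residual sum defining $Q_{i,\mtrx{k}}$ contains no hidden $\mtrx{s}^\mtrx{k}$-order unknowns: the index $\mtrx{m}=\tilde{\mtrx{k}}_j$ pairs with the vanishing constant term $R_{j,\mtrx{0}}^0=0$, and any $\mtrx{m}$ with $|\mtrx{m}|=|\mtrx{k}|$ other than $\mtrx{k}$ pairs with an off-diagonal linear coefficient $R_{j,\mtrx{e}_l}^0=0$ for $l\neq j$, so that $Q_{i,\mtrx{k}}$ depends only on coefficients of order strictly below $|\mtrx{k}|$, confirming that (\ref{eq:coef_EQ_W0}) defines a genuine recursion.
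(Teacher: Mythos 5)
Your proposal is correct and follows essentially the same route as the paper's own proof in Appendix~A: substitute the Taylor expansions into the invariance equation, match the $\mtrx{s}^{\mtrx{k}}$-coefficients, and peel off the $\mtrx{m}=\mtrx{e}_j$ and $\mtrx{m}=\mtrx{k}$ terms using $W_{i,\mtrx{e}_j}^0=\delta_{ij}$ and $R_{j,\mtrx{e}_j}^0=\lambda_j$. In fact, you make explicit two steps the paper leaves implicit (why those two distinguished terms produce $\delta_{ij}R_{j,\mtrx{k}}^0$ and $\sum_j k_j\lambda_j W_{i,\mtrx{k}}^0$, and why the residual $Q_{i,\mtrx{k}}$ contains only lower-order unknowns), which is a welcome clarification rather than a deviation.
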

\begin{proof}
We derive this result in Appendix \ref{app:coef_eq_auto}.\qed
\end{proof}
\subsubsection{Solving the autonomous invariance equation for $|\mtrx{k}|>0$}
As the autonomous part of the SSM is tangent to the spectral subspace $\mathcal{E}$ by construction (see Cabr{\'e} et al. \cite{Cabre2005}), we have that
\begin{gather}
\mtrx{W}_0(\mtrx{0})=\mtrx{0},\quad D_\mtrx{s}\mtrx{W}_0(\mtrx{0})\mathcal{E}=\mathcal{E}, \nonumber\\
\mtrx{R}_0(\mtrx{0})=\mtrx{0},\quad D_\mtrx{s}\mtrx{R}_0(\mtrx{0})=\Lambda_\mathcal{E}, \nonumber
\end{gather}
which satisfies the autonomous coefficient Eq. (\ref{eq:auto_SSM}) for $|\mtrx{k}|=0$ and $|\mtrx{k}|=1$. For $|\mtrx{k}|\geq 2$, we solve Eq. (\ref{eq:coef_EQ_W0}) for $W_{i,\mtrx{k}}^0$, which yields
\begin{equation}
W_{i,\mtrx{k}}^0 = \frac{\sum_{j=1}^{2}\delta_{ij}R_{j,\mtrx{k}}^0 + Q_{i,\mtrx{k}}}{\lambda_i-\sum_{j=1}^{2}k_j\lambda_j}. \label{eq:auto_sol}
\end{equation}
\subsection{Removing near-resonant terms from the autonomous SSM}
As observed by Szalai et al. \cite{Szalai2017}, if the spectral subspace $\mathcal{E}$ is lightly damped, the near-resonance relationships 
\begin{equation}
\lambda_1 - \left((k+1)\lambda_1 + k\lambda_2\right)\approx 0, \quad \lambda_2 - \left(k\lambda_1 + (k+1)\lambda_2\right) \approx 0
\end{equation}
hold for $k\in\mathbb{N}^+$. Specifically, we consider the damping in the spectral subspace $\mathcal{E}$ light if
\begin{equation}
\left|\text{Re}(\lambda_1)\right| \ll \frac{1}{2k}. \label{eq:small_damp_E}
\end{equation}
When this relation holds,  Eq. (\ref{eq:auto_sol}) will have large denominators, generally reducing the domain of convergence of the Taylor series approximations for $\mtrx{W}(\mtrx{s})$. To counter this effect, we will remove these near-resonant terms from the expression of the autonomous SSM and place them in the autonomous part of the reduced dynamics by setting 
\begin{gather}
R_{1,(k+1,k)}^0 = -Q_{1,(k+1,k)}:=\gamma_k \quad \Rightarrow  \quad W_{1,(k+1,k)}^0=0, \\
R_{2,(k,k+1)}^0 = -Q_{2,(k,k+1)}:=\bar{\gamma}_k \quad \Rightarrow  \quad W_{2,(k,k+1)}^0=0.
\end{gather}
This results in
\begin{equation}
\mtrx{R}_0(\mtrx{s}) = 
\begin{bmatrix}
\lambda_1 s_1 + \sum_{i=1}^M\gamma_is_1^{i+1}\bar{s}_1^{i} \\
\bar{\lambda}_1 \bar{s}_1 + \sum_{i=1}^M\bar{\gamma}_is_1^{i}\bar{s}_1^{i+1}
\end{bmatrix},\quad M\in\mathbb{N}^+,
\end{equation}
where we assumed that
\begin{equation}
\left|\text{Re}(\lambda_1)\right| \ll \frac{1}{2M}. \label{eq:order_small_auto}
\end{equation}

\subsection{The non-autonomous coefficient equations}
Collecting terms of $\mathcal{O}(\varepsilon)$ in Eq. (\ref{eq:invar}), we obtain 
\begin{align}
&\gmtrx{\Lambda} \mtrx{W}_1(\mtrx{s},\phi) + D_\mtrx{q}\mtrx{G}_\text{m}(\mtrx{W}_0(\mtrx{s}))\mtrx{W}_1(\mtrx{s},\phi) + \mtrx{F}_\text{m}(\phi) \label{eq:invar_O_eps}\\ & = D_\mtrx{s} \mtrx{W}_0(\mtrx{s})\mtrx{R}_1(\mtrx{s},\phi) 
+ D_\mtrx{s} \mtrx{W}_1(\mtrx{s},\phi)\mtrx{R}_0(\mtrx{s}) + D_{\phi} \mtrx{W}_1(\mtrx{s},\phi)\Omega. \nonumber
\end{align}
The non-autonomous part of the SSM and the reduced dynamics, are Taylor-expanded in the parameterization coordinates $\mtrx{s}$, which we explicitly express as 
\begin{gather}
\mtrx{W}_1(\mtrx{s},\phi) =
\begin{bmatrix}
w_1^1(\mtrx{s},\phi) \\
\vdots \\
w_{2n}^1(\mtrx{s},\phi) 
\end{bmatrix},\quad w_i^1(\mtrx{s},\phi)=\sum_{\mtrx{m}}W_{i,\mtrx{m}}^1(\phi)\mtrx{s}^\mtrx{m}, \\
\mtrx{R}_1(\mtrx{s},\phi) =
\begin{bmatrix}
r_1^1(\mtrx{s},\phi) \\
r_2^1(\mtrx{s},\phi) 
\end{bmatrix},\quad r_i^1(\mtrx{s},\phi)=\sum_{\mtrx{m}}R_{i,\mtrx{m}}^1(\phi)\mtrx{s}^\mtrx{m}. 
\end{gather}
\begin{theorem}\label{thrm:coef_eq}
For $\phi\in S^1$, the coefficient equation related to the $\textbf{k}^\text{th}$-power term of the $i^\text{th}$ row of the non-autonomous invariance Eq. (\ref{eq:invar_O_eps}) is equal to
\begin{equation}
\left(\lambda_i-\sum_{j=1}^{2}k_j\lambda_j\right)W_{i,\mtrx{k}}^1(\phi)-D_{\phi} W_{i,\mtrx{k}}^1(\phi)\Omega = \sum_{j=1}^{2}\delta_{ij}R_{j,\mtrx{k}}^1(\phi) + P_{i,\mtrx{k}}(\phi), \label{eq:coef_EQ_W1}
\end{equation}
where $P_{i,\mtrx{k}}(\phi)$ can be written as 
\begin{align}
P_{i,\mtrx{k}}(\phi) &= \sum_{j=1}^{2}\sum_{\substack{\mtrx{m}\leq\tilde{\mtrx{k}}_j \\ \mtrx{m}\neq \mtrx{e}_j \\ m_j>0}} m_j W_{i,\mtrx{m}}^0 R_{j,\tilde{\mtrx{k}}_j-\mtrx{m}}^1(\phi)+\sum_{j=1}^{2}\sum_{\substack{\mtrx{m}\leq\tilde{\mtrx{k}}_j \\ \mtrx{m}\neq\mtrx{k} \\ m_j>0}} m_j W_{i,\mtrx{m}}^1(\phi)R_{j,\tilde{\mtrx{k}}_j-\mtrx{m}}^0 \label{eq:def_P}\\
&-F_{i,\mtrx{k}}(\phi) -\left[\sum_{j=1}^{2n}D_{q_j}g_i(\mtrx{W}_0(\mtrx{s}))w_j^1(\mtrx{s},\phi)\right]_\mtrx{k}. \nonumber
\end{align}
\end{theorem}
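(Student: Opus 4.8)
The proof will closely parallel the autonomous computation behind Theorem \ref{thrm:coef_eq_auto} (carried out in Appendix \ref{app:coef_eq_auto}). The new ingredients relative to that case are the explicit $\phi$-dependence of all first-order coefficients, the extra phase-derivative term $D_\phi\mtrx{W}_1(\mtrx{s},\phi)\Omega$, and the fact that the nonlinearity now enters linearized as $D_\mtrx{q}\mtrx{G}_\text{m}(\mtrx{W}_0(\mtrx{s}))\mtrx{W}_1(\mtrx{s},\phi)$ rather than as $\mtrx{G}_\text{m}(\mtrx{W}_0(\mtrx{s}))$. The plan is to substitute the Taylor expansions of $\mtrx{W}_0,\mtrx{R}_0,\mtrx{W}_1,\mtrx{R}_1$ into the $\mathcal{O}(\varepsilon)$ invariance Eq. (\ref{eq:invar_O_eps}), read off its $i^\text{th}$ row, and match the coefficient of $\mtrx{s}^\mtrx{k}$ on the two sides, treating $\phi$ as a passive parameter throughout.

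First I would dispose of the terms that are linear in the expansion coefficients. Since $\gmtrx{\Lambda}$ is diagonal, the $i^\text{th}$ row of $\gmtrx{\Lambda}\mtrx{W}_1$ contributes $\lambda_i W_{i,\mtrx{k}}^1(\phi)$; the forcing term $\mtrx{F}_\text{m}(\phi)$ contributes its coefficient $F_{i,\mtrx{k}}(\phi)$; the linearized nonlinearity $D_\mtrx{q}\mtrx{G}_\text{m}(\mtrx{W}_0)\mtrx{W}_1$ contributes the bracketed coefficient $\left[\sum_{j=1}^{2n}D_{q_j}g_i(\mtrx{W}_0(\mtrx{s}))w_j^1(\mtrx{s},\phi)\right]_\mtrx{k}$; and the phase-derivative term $D_\phi\mtrx{W}_1\Omega$ yields $\Omega D_\phi W_{i,\mtrx{k}}^1(\phi)$. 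These account for every term in (\ref{eq:coef_EQ_W1}) except the two Jacobian--vector products.

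Next I would treat the products $D_\mtrx{s}\mtrx{W}_0(\mtrx{s})\mtrx{R}_1(\mtrx{s},\phi)$ and $D_\mtrx{s}\mtrx{W}_1(\mtrx{s},\phi)\mtrx{R}_0(\mtrx{s})$ by Cauchy multiplication. Using $\partial_{s_j}\mtrx{s}^\mtrx{m}=m_j\mtrx{s}^{\mtrx{m}-\mtrx{e}_j}$, the $\mtrx{s}^\mtrx{k}$-coefficient of the $i^\text{th}$ row of the first product is $\sum_{j=1}^{2}\sum_{\mtrx{m}} m_j W_{i,\mtrx{m}}^0 R_{j,\tilde{\mtrx{k}}_j-\mtrx{m}}^1(\phi)$ with $\tilde{\mtrx{k}}_j=\mtrx{k}+\mtrx{e}_j$, and similarly for the second product with the $0$ and $1$ superscripts interchanged. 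The key step is then to isolate the \emph{distinguished} index values using the tangency/normalization conditions recorded above Eq. (\ref{eq:auto_sol}): since $\mtrx{W}_0$ is tangent to $\mathcal{E}$ one has $W_{i,\mtrx{e}_j}^0=\delta_{ij}$, so the $\mtrx{m}=\mtrx{e}_j$ term of the first product produces exactly $\sum_{j=1}^{2}\delta_{ij}R_{j,\mtrx{k}}^1(\phi)$, which I would keep on the right-hand side as the explicit $R^1$-term in (\ref{eq:coef_EQ_W1}); and since the linear part of $\mtrx{R}_0$ equals $\gmtrx{\Lambda}_\mathcal{E}$ one has $R_{j,\mtrx{e}_l}^0=\lambda_j\delta_{jl}$, so the $\mtrx{m}=\mtrx{k}$ term of the second product produces $\left(\sum_{j=1}^{2}k_j\lambda_j\right)W_{i,\mtrx{k}}^1(\phi)$, which I would move to the left-hand side to form the factor $\lambda_i-\sum_j k_j\lambda_j$. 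The remaining summands, restricted by $\mtrx{m}\neq\mtrx{e}_j$ and $\mtrx{m}\neq\mtrx{k}$ respectively (together with $m_j>0$ and $\mtrx{m}\leq\tilde{\mtrx{k}}_j$ to keep the indices admissible), are precisely the two double sums in the definition (\ref{eq:def_P}) of $P_{i,\mtrx{k}}(\phi)$.

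Collecting the pieces, moving $\Omega D_\phi W_{i,\mtrx{k}}^1(\phi)$ to the left-hand side, and transposing the forcing and linearized-nonlinearity coefficients to the right-hand side (whence the two minus signs in (\ref{eq:def_P})) then yields (\ref{eq:coef_EQ_W1}). I expect the main obstacle to be purely bookkeeping: keeping the multi-index shifts $\tilde{\mtrx{k}}_j=\mtrx{k}+\mtrx{e}_j$, the admissibility constraints, and the exclusion of the two distinguished indices mutually consistent, so that no term is double-counted when the diagonal contributions are peeled off. No genuinely new analytic difficulty arises beyond the autonomous case, since $\phi$ rides along as a parameter in every algebraic manipulation and surfaces only through the single extra derivative term $D_\phi W_{i,\mtrx{k}}^1(\phi)\Omega$.
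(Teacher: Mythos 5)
Your proposal is correct and follows essentially the same route as the paper's proof in Appendix \ref{app:coef_eq}: expand all four first-order quantities in $\mtrx{s}$ with $\phi$ as a parameter, read off the $\mtrx{s}^{\mtrx{k}}$-coefficients of each term of Eq. (\ref{eq:invar_O_eps}) row by row (with the two Jacobian--vector products handled by exactly the Cauchy-product formula with the shift $\tilde{\mtrx{k}}_j=\mtrx{k}+\mtrx{e}_j$), and collect. In fact your write-up makes explicit a step the paper leaves implicit, namely that the $\mtrx{m}=\mtrx{e}_j$ term of $D_\mtrx{s}\mtrx{W}_0\mtrx{R}_1$ yields $\sum_{j}\delta_{ij}R_{j,\mtrx{k}}^1(\phi)$ via $W_{i,\mtrx{e}_j}^0=\delta_{ij}$ and the $\mtrx{m}=\mtrx{k}$ term of $D_\mtrx{s}\mtrx{W}_1\mtrx{R}_0$ yields $\bigl(\sum_{j}k_j\lambda_j\bigr)W_{i,\mtrx{k}}^1(\phi)$ via $R_{j,\mtrx{e}_l}^0=\lambda_j\delta_{jl}$, which is precisely why those two indices are excluded from the sums defining $P_{i,\mtrx{k}}(\phi)$.
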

\begin{proof}
We derive this result in Appendix \ref{app:coef_eq}.\qed
\end{proof}
\subsubsection{Solving the non-autonomous invariance equation for $|\mtrx{k}|=0$ \label{sec:sol_k_0}}
For $|\mtrx{k}|=\mtrx{0}$, Eq. (\ref{eq:coef_EQ_W1}) becomes
\begin{equation}
\lambda_iW_{i,\mtrx{0}}^1(\phi)-D_{\phi} W_{i,\mtrx{0}}^1(\phi)\Omega = \sum_{j=1}^{2}\delta_{ij}R_{j,\mtrx{0}}^1(\phi) -F_{i,\mtrx{0}}(\phi). \label{eq:coef_EQ_W1_k_0}
\end{equation}
Assuming that the forcing term $F_{i,\mtrx{0}}(\phi)$ can be written as 
\begin{equation}
F_{i,\mtrx{0}}(\phi)= \tilde{F}_{i,\mtrx{0}}\frac{\me^{\mi\phi}+\me^{-\mi\phi}}{2},
\end{equation}
we express $W_{i,\mtrx{0}}^1(\phi)$ and $R_{i,\mtrx{0}}^1(\phi)$ in the following form
\begin{equation}
W_{i,\mtrx{0}}^1(\phi) = a_{i,\mtrx{0}}\me^{\mi\phi} + b_{i,\mtrx{0}}\me^{-\mi\phi},\quad
R_{i,\mtrx{0}}^1(\phi) = c_{i,\mtrx{0}}\me^{\mi\phi} + d_{i,\mtrx{0}}\me^{-\mi\phi}.
\end{equation}
We can now write the solution of Eq. (\ref{eq:coef_EQ_W1_k_0}) as 
\begin{equation}
W_{i,\mtrx{0}}^1 = \frac{\delta_{i1}c_{1,\mtrx{0}}+\delta_{i2}c_{2,\mtrx{0}}-\frac{1}{2}\tilde{F}_{i,\mtrx{0}}}{\lambda_i-\mi\Omega}\me^{\mi\phi} + \frac{\delta_{i1}d_{1,\mtrx{0}}+\delta_{i2}d_{2,\mtrx{0}}-\frac{1}{2}\tilde{F}_{i,\mtrx{0}}}{\lambda_i+\mi\Omega}\me^{-\mi\phi}. \label{eq:coef_0}
\end{equation}
For lightly damped systems where $\text{Re}\lambda_1$ is small, we obtain small denominators in Eq. (\ref{eq:coef_0}) if the forcing frequency $\Omega$ is approximately equal to $\text{Im}\lambda_1$. We, therefore, intend to remove this near-resonance by setting
\begin{equation}
c_{1,\mtrx{0}}=\frac{1}{2}\tilde{F}_{1,\mtrx{0}},\quad c_{2,\mtrx{0}}=0,\quad
d_{1,\mtrx{0}}=0,\quad d_{2,\mtrx{0}}=\frac{1}{2}\tilde{F}_{2,\mtrx{0}}. \label{eq:c0}
\end{equation}
\subsubsection{Solving the non-autonomous invariance equation for $|\mtrx{k}|>0$}
For $|\mtrx{k}|>0$, the solution to the non-autonomous invariance Eq. (\ref{eq:coef_EQ_W1}) takes the form
\begin{equation}
W_{i,\mtrx{k}}^1(\phi)=\underbrace{\frac{\sum_{j=1}^{2}\delta_{ij}c_{j,\mtrx{k}}+\alpha_{i,\mtrx{k}}}{\lambda_i-\sum_{j=1}^{2}k_j\lambda_j-\mi\Omega}}_{a_{i,\mtrx{k}}}\me^{\mi\phi}+\underbrace{\frac{\sum_{j=1}^{2}\delta_{ij}d_{j,\mtrx{k}}+\beta_{i,\mtrx{k}}}{\lambda_i-\sum_{j=1}^{2}k_j\lambda_j+\mi\Omega}}_{b_{i,\mtrx{k}}}\me^{-\mi\phi}, \label{eq:coef_EQ_W1_k_higher}
\end{equation}
where we introduced the following notation for $P_{i,\mtrx{k}}$ in Eq. (\ref{eq:def_P}) 
\begin{equation}
P_{i,\mtrx{k}}=\alpha_{i,\mtrx{k}}\me^{\mi \phi}+\beta_{i,\mtrx{k}}\me^{-\mi \phi} . \nonumber
\end{equation}
\subsection{Removing near-resonant terms from the non-autonomous SSM}
Using the same reasoning as in section \ref{sec:sol_k_0}, we want to choose $c_{i,\mtrx{k}}$ and $d_{i,\mtrx{k}}$ in Eq. (\ref{eq:coef_EQ_W1_k_higher}) in a way to prevent the coefficients $a_{i,\mtrx{k}}$ and $b_{i,\mtrx{k}}$ from having any small denominators. We observe that if the spectral subspace $\mathcal{E}$ is lightly damped and the forcing frequency $\Omega$ is close to $\text{Im}\lambda_1$, the near-resonance relationships
\begin{align}
&\lambda_1 -\left(k\lambda_1 + k\lambda_2\right)-\mi\Omega\approx 0, \nonumber\\
&\lambda_1 -\left((k+1)\lambda_1 + (k-1)\lambda_2\right)+\mi\Omega\approx 0, \nonumber\\
&\lambda_2 -\left(k\lambda_1 + k\lambda_2\right)+\mi\Omega\approx 0, \nonumber\\
&\lambda_2 -\left((k-1)\lambda_1 + (k+1)\lambda_2\right)-\mi\Omega\approx 0, \nonumber
\end{align}
hold for $k\in\mathbb{N}^+$, where, for the non-autonomous expressions, a lightly damped spectral subspace $\mathcal{E}$ implies that 
\begin{equation}
\left|\text{Re}(\lambda_1)\right| \ll \frac{1}{\left|1-2k\right|}. \label{eq:small_damp_E_non} 
\end{equation}
Eq. (\ref{eq:small_damp_E_non}) is automatically satisfied if the small damping assumption in Eq. (\ref{eq:small_damp_E}) is satisfied, because
\begin{equation}
\frac{1}{2k}<\frac{1}{\left|1-2k\right|},\quad k\in\mathbb{N}^+.
\end{equation}   
The near-resonance terms are removed from the expressions of $\mtrx{W}_1(\mtrx{s},\phi)$ and included into the non-autonomous part of the reduced dynamics $\mtrx{R}_1(\mtrx{s},\phi)$ if we set
\begin{align}
c_{1,(k,k)}=-\alpha_{1,(k,k)}\quad &\Rightarrow \quad a_{1,(k,k)}=0, \nonumber \\
d_{2,(k,k)}=-\beta_{2,(k,k)}\quad &\Rightarrow \quad b_{2,(k,k)}=0, \nonumber \\
d_{1,(k+1,k-1)}=-\beta_{1,(k+1,k-1)}\quad &\Rightarrow \quad b_{1,(k+1,k-1)}=0, \nonumber \\
c_{2,(k-1,k+1)}=-\alpha_{2,(k-1,k+1)}\quad &\Rightarrow \quad a_{2,(k-1,k+1)}=0, \nonumber
\end{align}
where, by construction, we have
\begin{align}
d_{2,(k,k)} &= \bar{c}_{1,(k,k)}, \nonumber \\
c_{2,(k-1,k+1)} &= \bar{d}_{1,(k+1,k-1)}. \nonumber
\end{align}
This results in the following form for the non-autonomous part of the reduced dynamics:
\begin{equation}
\mtrx{R}_1(\mtrx{s},\phi)=
\begin{bmatrix}
c_{1,\mtrx{0}}\me^{\mi\phi} &+ \sum_{i=1}^M\left(c_{1,(i,i)}(\Omega)s_1^i\bar{s}_1^i\me^{\mi\phi}+d_{1,(i+1,i-1)}(\Omega)s_1^{i+1}\bar{s}_1^{i-1}\me^{-\mi\phi}\right) \\
\bar{c}_{1,\mtrx{0}}\me^{-\mi\phi} &+ \sum_{i=1}^M\left(\bar{c}_{1,(i,i)}(\Omega)s_1^i\bar{s}_1^i\me^{-\mi\phi}+\bar{d}_{1,(i+1,i-1)}(\Omega)s_1^{i-1}\bar{s}_1^{i+1}\me^{\mi\phi}\right)
\end{bmatrix},\nonumber
\end{equation}
where Eq. (\ref{eq:order_small_auto}) implies that $\left|\text{Re}(\lambda_1)\right| \ll \frac{1}{2M} < \frac{1}{\left|1-2M\right|}$.

\section{Reduced dynamics on the non-autonomous SSM}
Our next result concerns the dynamics on the SSM described in Theorem \ref{thrm:SSM}
\begin{theorem}\label{thm:red_dyn}
Under the assumption that $\left|\text{Re}(\lambda_1)\right| \ll \frac{1}{2M}$, the dynamics on the two-dimensional SSM given in Theorem \ref{thrm:SSM} can approximately be written in polar coordinates $(\rho,\psi)$ as
\begin{align}
\dot{\rho}&=a(\rho)+\varepsilon\left(f_1(\rho,\Omega)\cos(\psi)+f_2(\rho,\Omega)\sin(\psi)\right),\label{eq:red1_orig} \\
\dot{\psi}&=(b(\rho)-\Omega)+\frac{\varepsilon}{\rho}\left(g_1(\rho,\Omega)\cos(\psi)-g_2(\rho,\Omega)\sin(\psi)\right),\label{eq:red2_orig}
\end{align}
where
\upshape
\begin{align}
a(\rho)   &= \text{Re}(\lambda_1)\rho+\sum_{i=1}^M\text{Re}(\gamma_i)\rho^{2i+1},\label{eq:auto_a} \\
b(\rho)   &= \text{Im}(\lambda_1)+\sum_{i=1}^M\text{Im}(\gamma_i)\rho^{2i},  \\
f_1(\rho,\Omega) &= \text{Re}(c_{1,\mtrx{0}}) + \sum_{i=1}^M\left(\text{Re}(c_{1,(i,i)}(\Omega))+\text{Re}(d_{1,(i+1,i-1)}(\Omega))\right)\rho^{2i}, \\
f_2(\rho,\Omega) &= \text{Im}(c_{1,\mtrx{0}}) + \sum_{i=1}^M\left(\text{Im}(c_{1,(i,i)}(\Omega))-\text{Im}(d_{1,(i+1,i-1)}(\Omega))\right)\rho^{2i}, \\
g_1(\rho,\Omega) &= \text{Im}(c_{1,\mtrx{0}}) + \sum_{i=1}^M\left(\text{Im}(c_{1,(i,i)}(\Omega))+\text{Im}(d_{1,(i+1,i-1)}(\Omega))\right)\rho^{2i}, \\
g_2(\rho,\Omega) &= \text{Re}(c_{1,\mtrx{0}}) + \sum_{i=1}^M\left(\text{Re}(c_{1,(i,i)}(\Omega))-\text{Re}(d_{1,(i+1,i-1)}(\Omega))\right)\rho^{2i}, \label{eq:g2}
\end{align}
\itshape
with $2M+1$ denoting the order of the expansion.
\end{theorem}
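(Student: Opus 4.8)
The statement follows by rewriting the reduced vector field $\dot{\mtrx{s}}=\mtrx{R}_0(\mtrx{s})+\varepsilon\mtrx{R}_1(\mtrx{s},\phi)+\mathcal{O}(\varepsilon^2)$, whose coefficients were pinned down in the previous section, in polar coordinates; the argument is therefore organized bookkeeping rather than new analysis. First I would set $s_1=\rho\,\me^{\mi\theta}$, so that $s_2=\bar{s}_1=\rho\,\me^{-\mi\theta}$ and $s_1\bar{s}_1=\rho^2$, and introduce the phase lag $\psi=\theta-\phi$ relative to the forcing phase $\phi$, which advances at the constant rate $\dot{\phi}=\Omega$. Differentiating gives $\dot{s}_1=(\dot{\rho}+\mi\rho\dot{\theta})\,\me^{\mi\theta}$, so that multiplying the first component of $\mtrx{R}$ by $\me^{-\mi\theta}$ yields $\dot{\rho}=\text{Re}\!\left(\me^{-\mi\theta}\dot{s}_1\right)$ and $\rho\dot{\theta}=\text{Im}\!\left(\me^{-\mi\theta}\dot{s}_1\right)$. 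Since $s_2=\bar{s}_1$, the second row of $\mtrx{R}$ is the complex conjugate of the first and carries no new information.

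The key step is to verify that every retained monomial collapses to a function of $\rho$ and $\psi$ alone. In the autonomous part the resonant monomials satisfy $s_1^{i+1}\bar{s}_1^{i}=\rho^{2i+1}\me^{\mi\theta}$, so that $\me^{-\mi\theta}r_1^0=\lambda_1\rho+\sum_{i=1}^{M}\gamma_i\rho^{2i+1}$, whose real and imaginary parts are exactly $a(\rho)$ and $\rho\,b(\rho)$. In the non-autonomous part, after multiplying by $\me^{-\mi\theta}$ and using $\theta-\phi=\psi$, the retained families reduce to $s_1^{i}\bar{s}_1^{i}\me^{\mi\phi}\mapsto\rho^{2i}\me^{-\mi\psi}$ and $s_1^{i+1}\bar{s}_1^{i-1}\me^{-\mi\phi}\mapsto\rho^{2i}\me^{\mi\psi}$, so that $\me^{-\mi\theta}r_1^1=C(\rho,\Omega)\me^{-\mi\psi}+D(\rho,\Omega)\me^{\mi\psi}$ with $C=c_{1,\mtrx{0}}+\sum_{i}c_{1,(i,i)}(\Omega)\rho^{2i}$ and $D=\sum_{i}d_{1,(i+1,i-1)}(\Omega)\rho^{2i}$. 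Expanding $\me^{\pm\mi\psi}=\cos\psi\pm\mi\sin\psi$ and separating real and imaginary parts gives $\text{Re}(C+D)\cos\psi+\text{Im}(C-D)\sin\psi$ and $\text{Im}(C+D)\cos\psi-\text{Re}(C-D)\sin\psi$, which, upon matching coefficients, reproduce the functions $f_1,f_2,g_1,g_2$ of (\ref{eq:auto_a})--(\ref{eq:g2}). Substituting $\dot{\psi}=\dot{\theta}-\Omega$ into the $\theta$-equation then produces (\ref{eq:red2_orig}), while the $\rho$-equation is (\ref{eq:red1_orig}).

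The only substantive point, and the place where care is needed, is the claim that no monomial retains a residual dependence on $\theta$ or $\phi$ separately: this is precisely guaranteed by the near-resonant monomial structure of $\mtrx{R}_0$ and $\mtrx{R}_1$ selected in the previous section (all other coefficients having been set to zero), so that $\theta$ and $\phi$ enter the reduced dynamics only through the combination $\psi=\theta-\phi$. The remaining caveats are purely formal: the identities hold modulo the $\mathcal{O}(\varepsilon^2)$ truncation, and the removal of near-resonant terms on which this closed polar form rests is justified only under the light-damping hypothesis $\left|\text{Re}(\lambda_1)\right|\ll\frac{1}{2M}$ assumed in the statement.
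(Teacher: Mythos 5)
Your proposal is correct and follows essentially the same route as the paper's proof in Appendix \ref{app:red_dyn}: substitute the $\mathcal{O}(\varepsilon)$-truncated reduced dynamics into polar coordinates $s_1=\rho\,\me^{\mi\theta}$, introduce the phase lag $\psi=\theta-\phi$, divide by $\me^{\mi\theta}$, and split into real and imaginary parts. Your write-up is somewhat more explicit than the paper's (verifying the coefficient matching for $f_1,f_2,g_1,g_2$ and noting that the second row of $\mtrx{R}$ is redundant by conjugation), but it is the same argument.
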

\begin{proof}:
We derive this result in Appendix \ref{app:red_dyn}. \qed
\end{proof}
We note that Theorem \ref{thrm:SSM}, upon which Theorem \ref{thm:red_dyn} is based, is specifically geared towards constructing the SSM corresponding to the slowest vibration mode of system (\ref{eq:ds_diag}). However, the main result of Haller and Ponsioen \cite{Haller2016} is general enough to allow for the construction of an SSM over any mode of interest as long as appropriate non-resonance conditions are satisfied. Therefore, an approach similar to the one described in this section can be applied to extract the FRCs of higher-order modes.  

In the unforced limit ($\varepsilon=0$), the reduced system (\ref{eq:red1_orig})-(\ref{eq:red2_orig}) can have fixed points but no nontrivial periodic orbits. This is because (\ref{eq:red1_orig}) decouples from (\ref{eq:red2_orig}), representing a one-dimensional ordinary differential equation that cannot have non-constant periodic solutions. By construction, the trivial fixed point of (\ref{eq:red1_orig})-(\ref{eq:red2_orig}) is asymptotically stable and will persist for $\varepsilon>0$. These persisting fixed points satisfy the system of equations
\begin{equation}
\mtrx{F}(\mtrx{u}) = 
\begin{bmatrix}
F_1(\mtrx{u})\\
F_2(\mtrx{u})
\end{bmatrix} = 
\begin{bmatrix}
a(\rho) + \varepsilon\left(f_1(\rho,\Omega)\cos(\psi)+f_2(\rho,\Omega)\sin(\psi)\right) \\
(b(\rho)-\Omega)\rho + \varepsilon\left(g_1(\rho,\Omega)\cos(\psi)-g_2(\rho,\Omega)\sin(\psi)\right) 
\end{bmatrix} = \mtrx{0}, \label{eq:zeroproblem}\\
\end{equation}
where
\begin{equation}
\mtrx{F}(\mtrx{u}):\mathbb{R}^3 \rightarrow \mathbb{R}^2, \quad
\mtrx{u} = 
\begin{bmatrix}
\rho \\
\Omega\\ 
\psi
\end{bmatrix}. \nonumber
\end{equation}
If there exists a regular point  $\mtrx p=(\rho,\Omega,\psi)$, such that $\mtrx F(\mtrx p )=\mtrx 0$  in (\ref{eq:zeroproblem}) and the Jacobian of $\mtrx F$ evaluated at $\mtrx p$ is surjective, then by the implicit function theorem, locally there exists a one-dimensional submanifold of $\mathbb{R}^3$ which will represent the forced response curve when projected onto the $(\Omega,\rho)$-space. 
The stability of these fixed points (which correspond to periodic solutions of the full mechanical system) is determined by the real parts of the eigenvalues of the Jacobian of $\mtrx{F}(\mtrx{u})$, as illustrated in Fig. \ref{fig:overview}.

\begin{figure}[h!]		
\centering		
    \begin{subfigure}{1\textwidth}		
        \centering		
        \if\mycmd1		
        \includegraphics[scale=0.45]{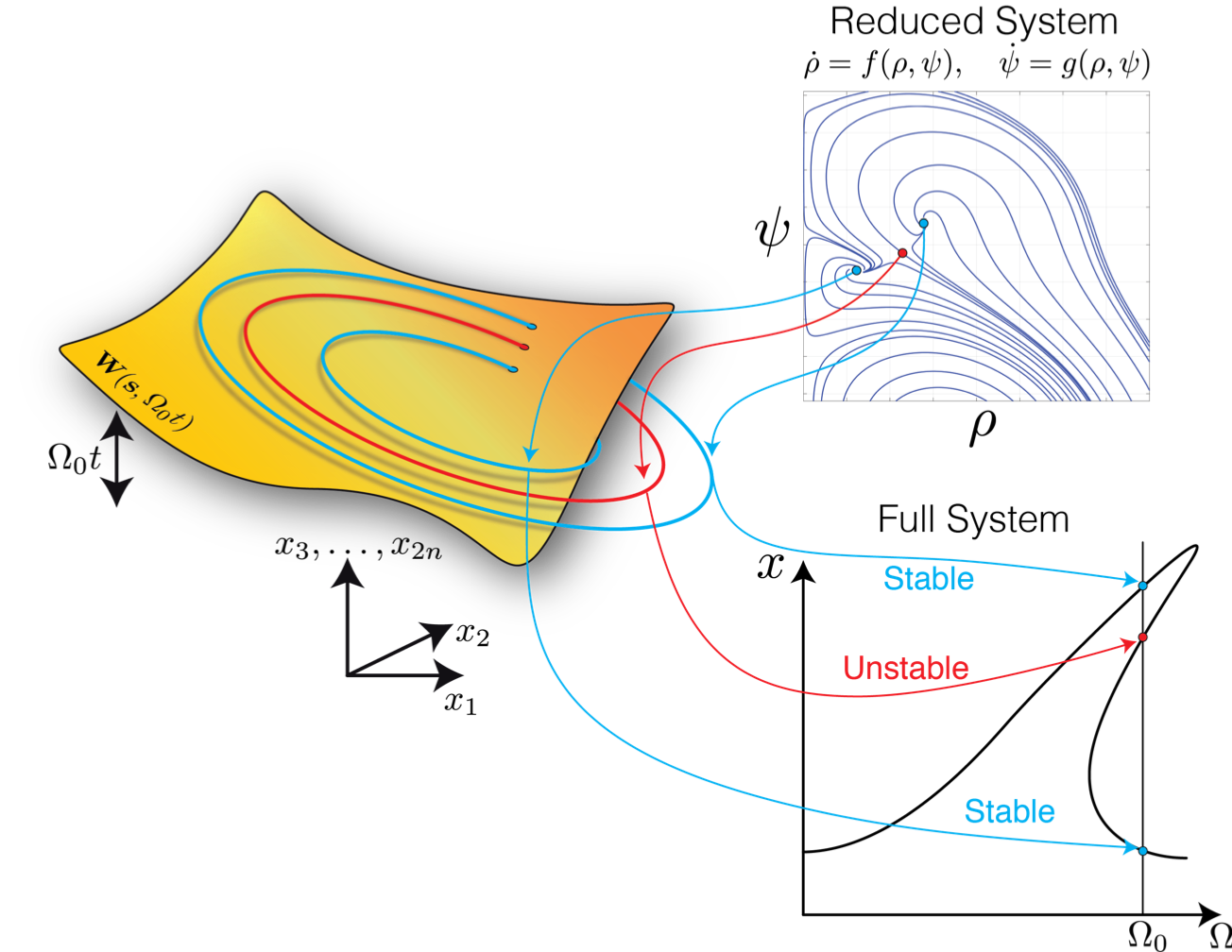}		
        \fi		
    \end{subfigure}		
    \caption{Illustration of how the fixed points of the reduced dynamics for a fixed forcing frequency $\Omega_0$ are mapped to periodic orbits in the full phase space by the mapping $\mtrx{W}(\mtrx{s},\Omega_0 t)$.  \label{fig:overview}}		
\end{figure}		

\noindent In Appendix \ref{app:geometric}, we give a geometric interpretation of the construction of zeros for the reduced dynamics on the SSM.

In summary, Theorem \ref{thm:red_dyn} gives explicit formulas that enable the calculation of the exact dynamics up to any required order of accuracy for the SSMs associated with the normal modes of the original mechanical system (\ref{eq:mech_sys}). Once the reduced dynamics is calculated, finding the nonlinear periodic responses of the system, including isolas, simply amounts to finding the zeros of the right-hand side of Eqs. (\ref{eq:red1_orig})-(\ref{eq:red2_orig}). No other numerical simulation or iteration is involved in constructing the forced response from SSM-based, exact model reduction.

\section{Example: A discretized, forced Bernoulli beam with a nonlinear spring \label{sec:beam}}
As an application of our main result on non-autonomous, SSM-based model reduction and forced response, we now consider a discretized, cantilevered Bernoulli beam with a cubic spring attached to the free end of the beam. We extract the forced-response curve around the first eigenfrequency of the beam using \ssm\footnote{\ssm is available at: \url{www.georgehaller.com.}}, the HB method (\textsc{nl}vib tool \cite{Malte2018}) and the \texttt{po} toolbox of \textsc{coco}, a numerical continuation package discussed in \cite{Dankowicz2013}. We apply all three methods on the same discretized beam for an increasing number of elements in the discretization, ranging from 10 degrees of freedom to 10,000 degrees of freedom. We note that \textsc{nl}vib tool and \textsc{coco} only run in series. Indeed, neither approach would benefit from parallelization over different forcing cases, as steady-state responses forced for one parameter configuration are heavily used to initialize the search for steady states for the next parameter configuration. In contrast, finding steady states from \ssm involves no numerical simulations or iterations and hence can be done in parallel for all forcing parameter values of interest. We will nevertheless include results from \ssm run in series, in addition to a parallelized run over 20 processors. 

The beam is of length $L$, with the square cross-section $A$, situated in a Cartesian coordinate system of $(x,y,z)$ and basis $(\mtrx{e}_x,\mtrx{e}_y,\mtrx{e}_z)$. The relevant beam parameters are listed in Table \ref{tab:system_par_beam}. 
\begin{table}[h!]
\begin{centering}
\caption{Notation used in the discretized beam example. \label{tab:system_par_beam}}
\begin{tabular}{|c|c|}
\hline 
Symbol & Meaning {(}unit{)}\tabularnewline
\hline 
\hline 
$L$ & Length of beam $(\unit[]{mm})$\tabularnewline
\hline 
$h$ & Height of beam $(\unit[]{mm})$\tabularnewline
\hline 
$b$ & Width of beam $(\unit[]{mm})$\tabularnewline
\hline 
$\rho$ & Density $(\unit[]{kg/mm^3})$\tabularnewline
\hline 
$E$ & Young's Modulus $(\unit[]{kPa})$\tabularnewline
\hline 
$I$ & Area moment of inertia $(\unit[]{mm^4})$\tabularnewline
\hline 
$\kappa$ & Coefficient cubic spring $(\unit[]{mN/mm^3})$  \tabularnewline
\hline 
$A$ & Cross-section of beam $(\unit[]{mm^2})$\tabularnewline
\hline 
$P$ & External forcing amplitude $(\unit[]{mN})$\tabularnewline
\hline
\end{tabular}
\par\end{centering}
\end{table}
The beam's neutral axis is the line of points coinciding with the $x$-axis. The Bernoulli hypothesis states that initially straight material lines, normal to the neutral axis, remain (a) straight and (b) inextensible, and (c) rotate as rigid lines to remain perpendicular to the beam's neutral axis after deformation. The transverse displacement of a material point with initial coordinates on the beam's neutral axis at $z=0$ is denoted by $w(x)$. The rotation angle of a transverse normal line about the $y$-axis is given by $-\partial_x w(x)$. We assume an isotropic, linearly elastic constitutive relation between the stresses and strains. This yields the following equations of motion
\begin{equation}
\rho A\frac{\partial^2 w(x,t)}{\partial{t^2}}-\rho I \frac{\partial^4 w(x,t)}{\partial x^2 \partial t^2} + EI \frac{\partial^4 w(x,t)}{\partial x^4} = 0. \label{eq:PDE_beam}
\end{equation}
We can neglect the mixed partial derivative term in Eq. (\ref{eq:PDE_beam}) by assuming that the thickness of the beam is small compared to its length, i.e., $h \ll L$ (see Reddy and Mahaffey \cite{Reddy2013}), we therefore can write Eq. (\ref{eq:PDE_beam}) as
\begin{equation}
\rho A\frac{\partial^2 w(x,t)}{\partial{t^2}} + EI \frac{\partial^4 w(x,t)}{\partial x^4} = 0. \label{eq:PDE_beam_2}
\end{equation}

We discretize Eq. (\ref{eq:PDE_beam_2}) and obtain a set of ordinary differential equations
\begin{equation}
\mtrx{M}\ddot{\mtrx{y}}+\mtrx{K}\mtrx{y}=\mtrx{0},
\end{equation} 
where $\mtrx{y}\in\mathbb{R}^{2m}=\mathbb{R}^n$, and $m$ is the number of elements used in the discretization. Each node of the beam has two coordinates related to the transverse displacement $w(x)$ and the rotation angle $-\partial_x w(x)$ of the cross section. Structural damping is assumed by considering the damping matrix
\begin{equation}
\mtrx{C} = \alpha \mtrx{M} + \beta \mtrx{K},
\end{equation}
with parameters $\alpha$ and $\beta$. We apply cosinusoidal external forcing on the transverse displacement coordinate at the free end of the beam with forcing frequency $\Omega$ and forcing amplitude $\varepsilon P$. Additionally, we add a cubic spring along this coordinate, with coefficient $\kappa$. As a result, the second-order equations of motion can be written as
\begin{equation}
\mtrx{M}\ddot{\mtrx{y}}+\mtrx{C}\dot{\mtrx{y}}+\mtrx{K}\mtrx{y}+\mtrx{g}(\mtrx{y},\dot{\mtrx{y}})=\varepsilon\mtrx{f}(\Omega t). \label{eq:eom_beam_2nd}
\end{equation} 
We give an illustration of the beam in Fig. \ref{fig:EB_beam}.
\begin{figure}[h!]
\centering
    \begin{subfigure}{1\textwidth}
        \centering
        \if\mycmd1
        \includegraphics[scale=1]{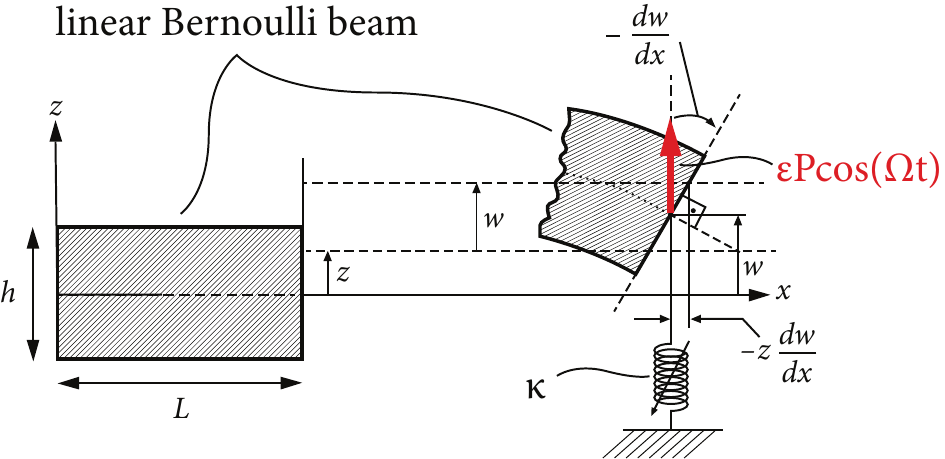}
        \fi
    \end{subfigure}
    \caption{Forced Bernoulli beam with a cubic spring. \label{fig:EB_beam}}
\end{figure}

We transform Eq. (\ref{eq:eom_beam_2nd}) to first-order form by setting $\mtrx{x}=[\mtrx{x}_1,\mtrx{x}_2]^\top=[\mtrx{y},\dot{\mtrx{y}}]^\top$ and apply a change of coordinates $\mtrx{x}=\mtrx{T}\mtrx{q}$, resulting in
\begin{align}
\dot{\mtrx{q}} & =
\mtrx{T}^{-1}\left(\begin{array}{cc}
\mtrx{0} & \mtrx{I}\\
-\mtrx{M}^{-1}\mtrx{K} & -\mtrx{M}^{-1}\mtrx{C}
\end{array}\right)\mtrx{T}\mtrx{q}
+\mtrx{T}^{-1}\left(\begin{array}{c}
\mtrx{0}\\
-\mtrx{M}^{-1}\mtrx{g}(\mtrx{T}\mtrx{q})
\end{array}\right)
+ \varepsilon\mtrx{T}^{-1}\left(
\begin{array}{c}
\mtrx{0}\\
\mtrx{M}^{-1}\mtrx{f}(\Omega t)
\end{array}
\right) \nonumber \\
&= \mtrx{\Lambda}\mtrx{q} +
\mtrx{T}^{-1}\left(\begin{array}{cc}
\mtrx{0} & \mtrx{0}\\
\mtrx{0} & \mtrx{M}^{-1}
\end{array}\right)
\left(\begin{array}{c}
0 \\
\vdots\\
-\kappa \left(\sum_{i=1}^{2n}[\mtrx{T}]_{n-1,i}q_i\right)^3 \\
0
\end{array}\right) +
 \varepsilon \mtrx{F}_\text{m}(\Omega t) \nonumber \\
&= \mtrx{\Lambda}\mtrx{q} + \mtrx{G}_\text{m}(\mtrx{q}) + \varepsilon \mtrx{F}_\text{m}(\Omega t) ,\label{eq:eom_beam_1st}
\end{align}
Using \ssm, we compute a third-order SSM reduced model of system (\ref{eq:eom_beam_1st}), which will take the following form
\begin{align}
\dot{\rho}&=a(\rho)+\varepsilon\left(f_1(\rho,\Omega)\cos(\psi)+f_2(\rho,\Omega)\sin(\psi)\right),\label{eq:beam_red_1} \\
\dot{\psi}&=(b(\rho)-\Omega)+\frac{\varepsilon}{\rho}\left(g_1(\rho,\Omega)\cos(\psi)-g_2(\rho,\Omega)\sin(\psi)\right),\label{eq:beam_red_2}
\end{align}
where
\begin{align}
a(\rho)   &= \text{Re}(\lambda_1)\rho+\text{Re}(\gamma_1)\rho^{3}, \nonumber\\
b(\rho)   &= \text{Im}(\lambda_1)+\text{Im}(\gamma_1)\rho^{2},  \nonumber\\
f_1(\rho,\Omega) &= \text{Re}(c_{1,(0,0)}) + \left(\text{Re}(c_{1,(1,1)}(\Omega))+\text{Re}(d_{1,(2,0)}(\Omega))\right)\rho^{2}, \nonumber\\
f_2(\rho,\Omega) &= \text{Im}(c_{1,(0,0)}) + \left(\text{Im}(c_{1,(1,1)}(\Omega))-\text{Im}(d_{1,(2,0)}(\Omega))\right)\rho^{2},\nonumber \\
g_1(\rho,\Omega) &= \text{Im}(c_{1,(0,0)}) + \left(\text{Im}(c_{1,(1,1)}(\Omega))+\text{Im}(d_{1,(2,0)}(\Omega))\right)\rho^{2},\nonumber \\
g_2(\rho,\Omega) &= \text{Re}(c_{1,(0,0)}) + \left(\text{Re}(c_{1,(1,1)}(\Omega))-\text{Re}(d_{1,(2,0)}(\Omega))\right)\rho^{2}.  \nonumber
\end{align}
We can explicitly compute the autonomous and non-autonomous SSM coefficients, which are used to verify the output given by \ssm,
\begin{align}
\gamma_1 &= -3 \kappa [\tilde{\mtrx{B}}]_{1,2n-1}[\mtrx{T}]_{n-1,1}^2[\mtrx{T}]_{n-1,2}, \\
c_{1,(0,0)} &= \frac{[\tilde{\mtrx{B}}]_{1,2n-1}P}{2}, \\
c_{1,(1,1)} &= 6\kappa [\tilde{\mtrx{B}}]_{1,2n-1}[\mtrx{T}]_{n-1,1}[\mtrx{T}]_{n-1,2}\sum_{j=2}^{2n}\frac{[\mtrx{T}]_{n-1,j}[\tilde{\mtrx{B}}]_{j,2n-1}P}{2(\lambda_j-\mi \Omega)}, \\
d_{1,(2,0)} &= 3\kappa [\tilde{\mtrx{B}}]_{1,2n-1}[\mtrx{T}]_{n-1,1}^2\sum_{\substack{j=1\\ j \neq 2}}^{2n}\frac{[\mtrx{T}]_{n-1,j}[\tilde{\mtrx{B}}]_{j,2n-1}P}{2(\lambda_j+\mi \Omega)},
\end{align}
where the matrix $\tilde{\mtrx{B}}$ is defined as
\begin{equation}
\tilde{\mtrx{B}}=
\mtrx{T}^{-1}\left(\begin{array}{cc}
\mtrx{0} & \mtrx{0}\\
\mtrx{0} & \mtrx{M}^{-1}
\end{array}\right).
\end{equation}

\subsection{Numerical results}
In our upcoming comparison, the collocation computations were performed on a remote Intel Xeon E5-2680v3 processor (3.3 GHz) on the ETH cluster due to large computational times. The SSM and HB computations were performed on an Intel Xeon X5675 processor (3.07 GHz) on a local workstation.

We now compute the forced-response curves around the first vibration mode of the discretized beam model described above. The FRCs will be obtained independently from SSM theory, the harmonic balance method and a collocation method. We list the chosen geometric and material parameter values in Table \ref{tab:system_par_beam_ex_1}. 
\begin{table}[H]
\begin{centering}
\caption{Geometric and material parameters for the Bernoulli beam. \label{tab:system_par_beam_ex_1}}
\begin{tabular}{|c|c|}
\hline 
Symbol & Value \tabularnewline
\hline 
\hline 
$L$ & $\unit[2700]{mm}$\tabularnewline
\hline 
$h$ & $\unit[10]{mm}$ \tabularnewline
\hline 
$b$ & $\unit[10]{mm}$ \tabularnewline
\hline 
$\rho$ & $\unit[1780\cdot 10^{-9}]{kg/mm^3}$ \tabularnewline
\hline 
$E$ & $\unit[45\cdot 10^6]{kPa}$ \tabularnewline
\hline 
$\kappa$ & $\unit[4]{mN/mm^3}$ \tabularnewline
\hline 
$\alpha$ & $\unit[1.25\cdot 10^{-4}]{s^{-1}}$ \tabularnewline
\hline 
$\beta$ & $\unit[2.5 \cdot 10^{-4}]{s}$ \tabularnewline
\hline
$P$ & $\unit[0.1]{mN}$ \tabularnewline
\hline
\end{tabular}
\par\end{centering}
\end{table}
As system (\ref{eq:eom_beam_1st}) is a discretized version of Eq. (\ref{eq:PDE_beam_2}), the first natural frequency of the conservative, unforced, fixed-free beam, consisting of $m$ elements, will approximate 
\begin{equation}
\omega_1 = (\beta_1 l)^2 \sqrt{\frac{EI}{\rho A l^4}}\approx \unit[7]{rad/s}, \quad \beta_1 l = 1.875104, \label{eq:cont_1st_eig}
\end{equation}
for an increasing value of $m$ (see Rao \cite{singiresu1995mechanical}). If the damping is small, the imaginary part of $\lambda_1$ will approximately be equal to $\omega_1$ (cf. G{\'e}radin and Rixen \cite{Geradin2014}). 

We used the \texttt{ode{\_}isol2po} toolbox constructor in \textsc{coco} \cite{Dankowicz2013} for continuation along a family of single-segment periodic orbits from an initial solution guess. The single-segment collocation zero problem is initially constructed on a default mesh with 10 intervals, 5 base points and  4 collocation nodes in each interval. The continuation algorithm is then instructed to make adaptive changes to the problem discretization after each step of continuation.  

We also used the \textsc{nl}vib tool \cite{Malte2018}, which implements the HB method coupled to a path-continuation procedure. In the HB method, it is assumed that the system has a steady-state solution represented by a Fourier series
\begin{equation}
\mtrx{y}=\text{Re}\left(\sum_{k=0}^\infty \mtrx{c}_k \me^{\mi k \Omega t}\right), \label{eq:fourier_assump}
\end{equation}
where $\mtrx{c}_k\in\mathbb{C}^n$ is a vector containing the complex Fourier coefficients corresponding to the $k^\text{th}$ harmonic. Furthermore, it is assumed that the nonlinear force vector $\mtrx{g}(\mtrx{y},\dot{\mtrx{y}})$ can be approximated by a Fourier series as well. 

By substituting the assumed solution (\ref{eq:fourier_assump}) into the original ordinary differential equations (\ref{eq:eom_beam_2nd}) and restricting the result to finitely many harmonics $H$ (we will use $H=10$), the original equations are transformed into a set of nonlinear algebraic equations
\begin{equation}
\left(-(k\Omega)^2\mtrx{M}+\mi k\Omega\mtrx{C} + \mtrx{K} \right)\mtrx{c}_k + \mtrx{f}_{\text{nl},k}(\mtrx{c}_0,\ldots,\mtrx{c}_H)-\mtrx{f}_{\text{ext},k}=\mtrx{0},\quad k=0,\ldots,H \label{eq:HBmethod}
\end{equation}
to be solved simultaneously for all $\mtrx{c}_k$, with $k=0,\ldots,H$. This is typically done using a Newton-Raphson iteration scheme.  

To evaluate the nonlinear force vector $\mtrx{f}_{\text{nl},k}(\mtrx{c}_0,\ldots,\mtrx{c}_H)$ in (\ref{eq:HBmethod}), \textsc{nl}vib tool uses the Alternating-Frequency-Time (AFT) method, proposed first by Cameron et al. \cite{cameron1989alternating}. This algorithm uses the inverse Fourier transform of the positions and velocities in the frequency domain, creating a sampled time signal over one period of oscillation. The time signal is then substituted into the nonlinear force vector $\mtrx{g}(\mtrx{y},\dot{\mtrx{y}})$ and the resulting output signal is in turn transformed back to the frequency domain using a Fourier transformation. For several implementations of the AFT method we refer to\cite{detroux2015harmonic,cardona1998fast,zhang2014harmonic,sinou2007non}. 

A shortcoming of the HB method, as compared to SSM theory and the collocation method used by \textsc{coco}, is that it does not provide any information about the stability of the solutions, which has to be analyzed in a separate effort. As described in Detroux et al. \cite{detroux2015harmonic}, a variant of Floquet theory can be used in order to identify the stability of the solutions, which is applicable in the frequency domain and is known as Hill's method \cite{hill1886part}. This separate analysis has not been implemented in the current work. 

We now compute the forced-response curve, around $\omega_1$ (\ref{eq:cont_1st_eig}), over the interval $S_\Omega=[6.88,7.12]$ for an increasing number of elements $m$ and $\varepsilon=0.002$. We verify our results and compare the recorded computational times using \ssm with the numerical continuation package \coco and the harmonic balance method. The corresponding computational times are listed in Fig. \ref{fig:bar_plot}.
\begin{figure}
\centering
     \begin{subfigure}{1\textwidth}
        \centering
        \if\mycmd1
        \includegraphics[scale=0.19]{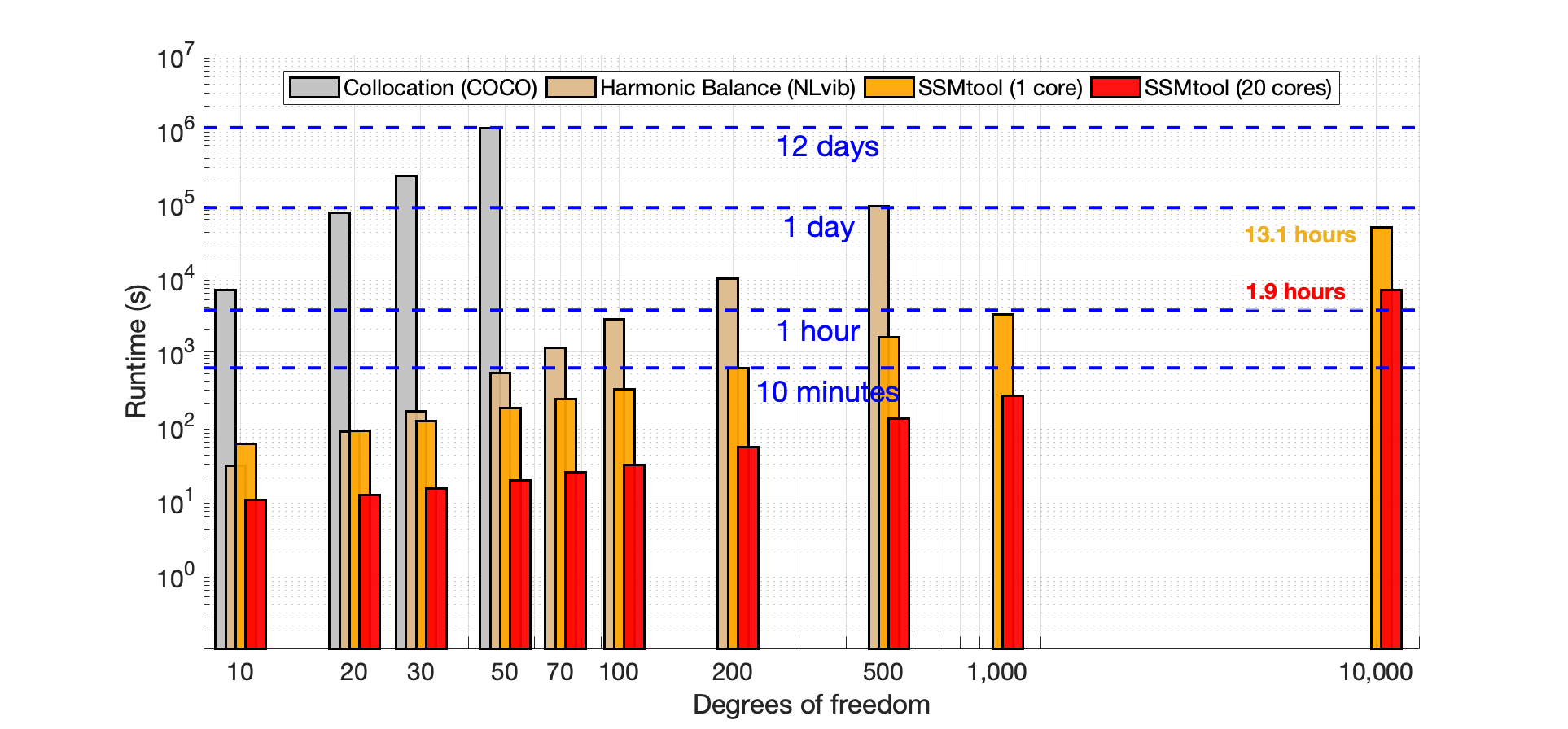}
        \fi
        \vspace{3mm}
    \end{subfigure}
    \caption{Computational times to extract the forced-response curve around the first vibration mode of a cantilevered Bernoulli beam with a cubic spring over the interval $S_\Omega=[6.88,7.12]$, using collocation, harmonic balance and \ssm. \label{fig:bar_plot}}
\end{figure}

As can be seen in Fig. \ref{fig:bar_plot}, the collocation based method with \textsc{coco} takes 12 full days to compute the forced-response curve, over the interval $S_\Omega$, for a 50-degrees-of-freedom system and due to this reason has not been used for higher-degrees-of-freedom simulations. For the discretized beam with 500-degrees-of-freedom, the HB method with 10 harmonics takes around 1 day to compute the forced response curve, where the number of nonlinear algebraic equations and unknowns is given by
\begin{equation}
p = n(2H+1). \nonumber 
\end{equation}
For the 1000 degrees-of-freedom system, the total number of nonlinear algebraic equations is $p=21000$, which has to be solved for the $21000$ unknown Fourier coefficients. This becomes unfeasible using the available \matlab implementation of the HB method. 

For the \ssm calculation, the 10,000 degrees of freedom example takes a total of 13 hours when computed on a single core. Here we sampled the frequency interval $S_\Omega$ for 60 frequency values $\Omega_i$ and computed the third-order approximation for the non-autonomous SSM. As the autonomous part does not depend on the forcing frequency $\Omega$, we only have to compute this part once. The non-autonomous part is recalculated for different samples $\Omega_i$, which makes it possible to parallelize the non-autonomous computations by dividing the frequency samples over different cores. Running the non-autonomous part of the SSM computation on 20 cores reduces the total computational time from 13 hours to 2 hours. 

The resulting FRCs corresponding to the absolute maximum displacement during one period of oscillation of the transverse component at the free end of the beam, for $n=\{10,50,500, 10000\}$ over the interval $S_\Omega$, are listed in Figs. \ref{fig:FRC_comparison}. In Fig. \ref{fig:phase_plane} we illustrate the phase plane of the two-dimensional SSM-reduced system extracted from the 100 degrees-of-freedom beam example, showing how the domain of attraction of the higher amplitude stable fixed point reduces up to the point where a saddle-node bifurcation occurs, which is where the stable and saddle-type fixed points collide and annihilate each other. 

\begin{figure}
\centering
     \begin{subfigure}{0.45\textwidth}
        \centering
        \if\mycmd1
        \includegraphics[scale=0.22]{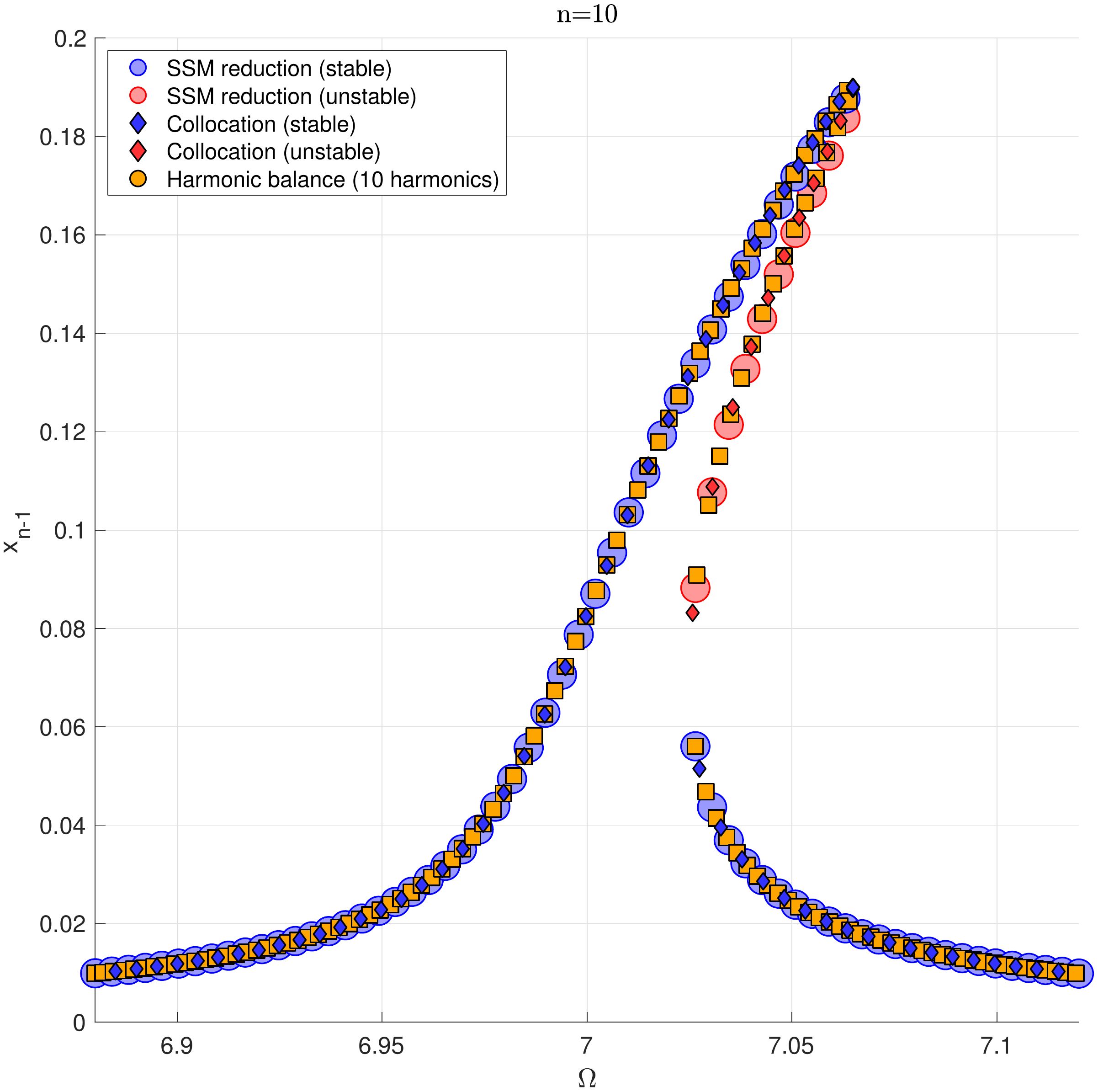}
        \fi
        \caption{}
        \vspace{3mm}
    \end{subfigure}
    \begin{subfigure}{0.45\textwidth}
        \centering
        \if\mycmd1
        \includegraphics[scale=0.22]{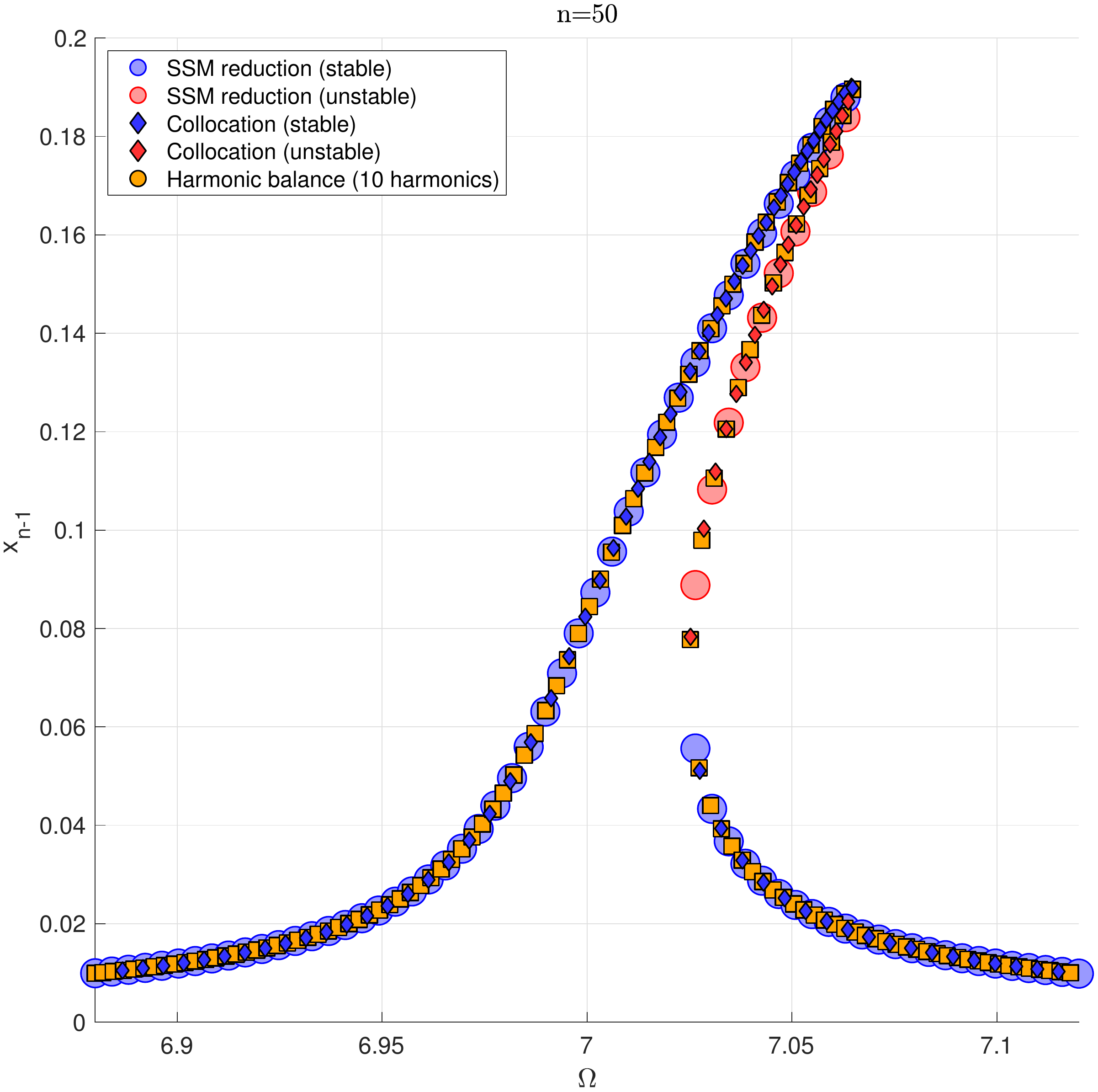}
        \fi
        \caption{}
        \vspace{3mm}
    \end{subfigure}

     \begin{subfigure}{0.45\textwidth}
        \centering
        \if\mycmd1
        \includegraphics[scale=0.22]{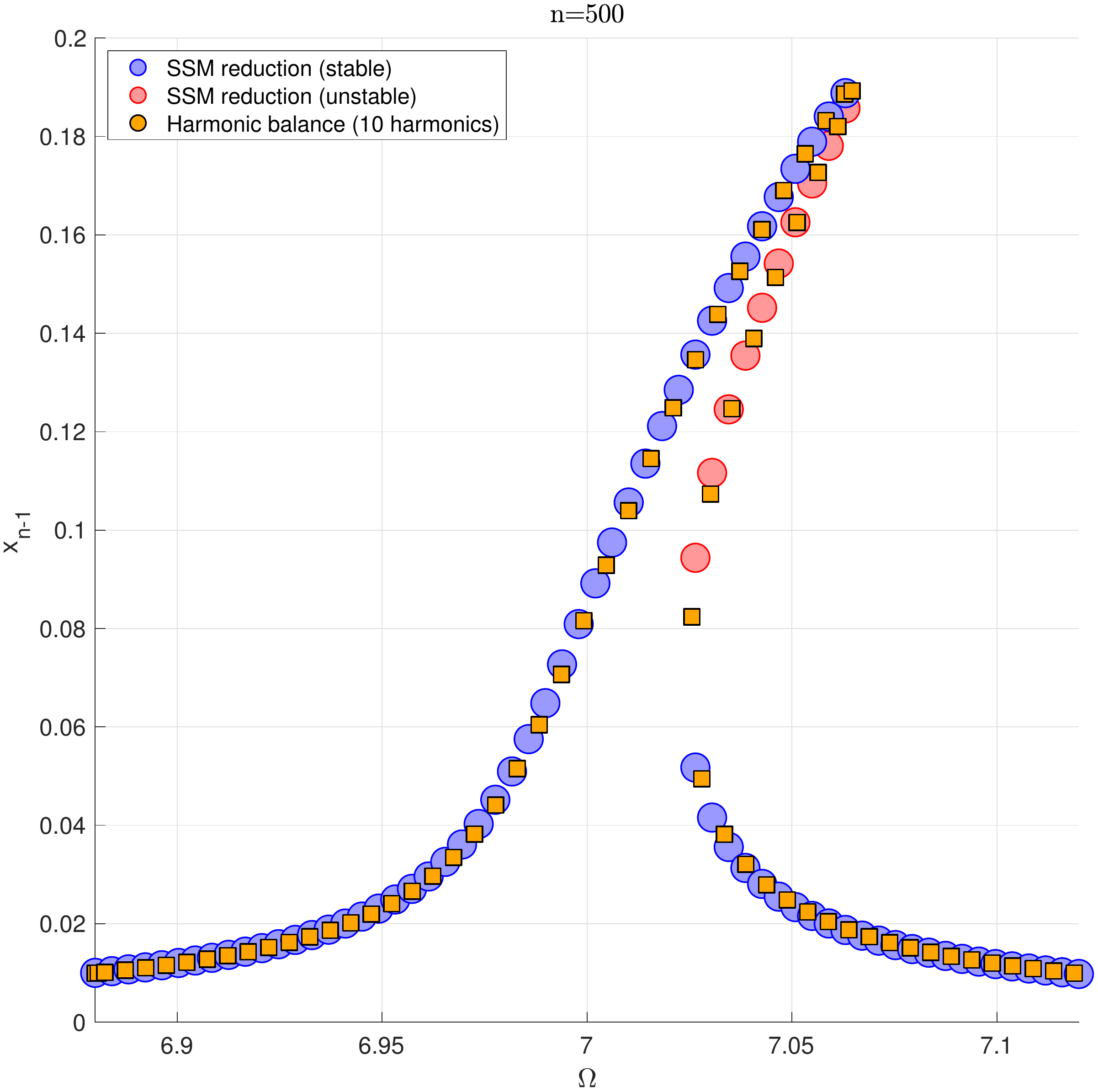}
        \fi
        \caption{}
        \vspace{3mm}
    \end{subfigure}
    \begin{subfigure}{0.45\textwidth}
        \centering
        \if\mycmd1
        \includegraphics[scale=0.22]{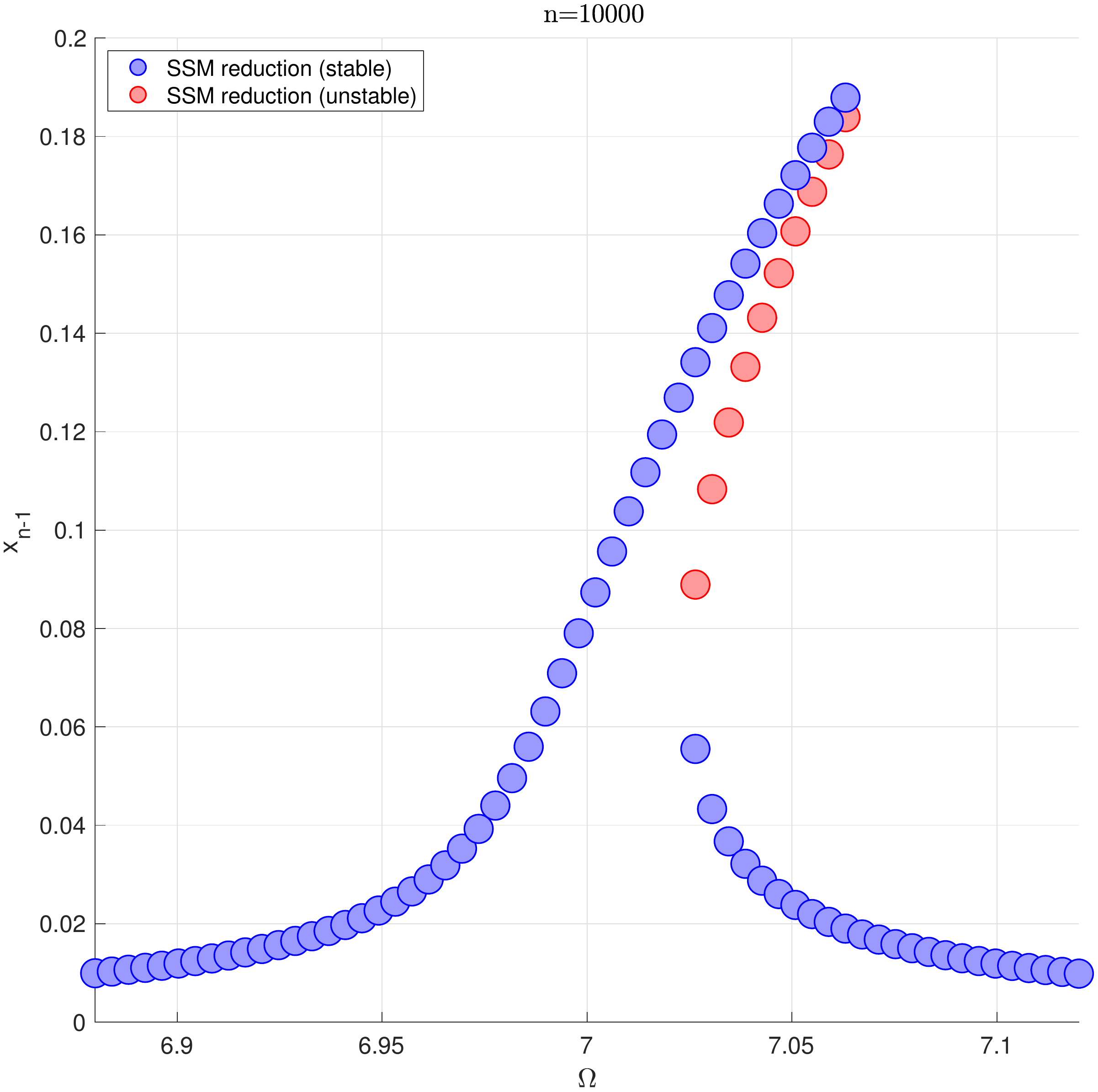}
        \fi
        \caption{}
        \vspace{3mm}
    \end{subfigure}
    \caption{Extracted forced response curves for $x_{n-1}$, using a third-order SSM reduced model, collocation and the harmonic balance method, for an increasing number of degrees of freedom $n$, where $n=\{10,50,500,10000\}$ in Figs (a), (b), (c) and (d), respectively.\label{fig:FRC_comparison}}
\end{figure}


\begin{figure}
\centering
     \begin{subfigure}{0.45\textwidth}
        \centering
        \if\mycmd1
        \includegraphics[scale=0.25]{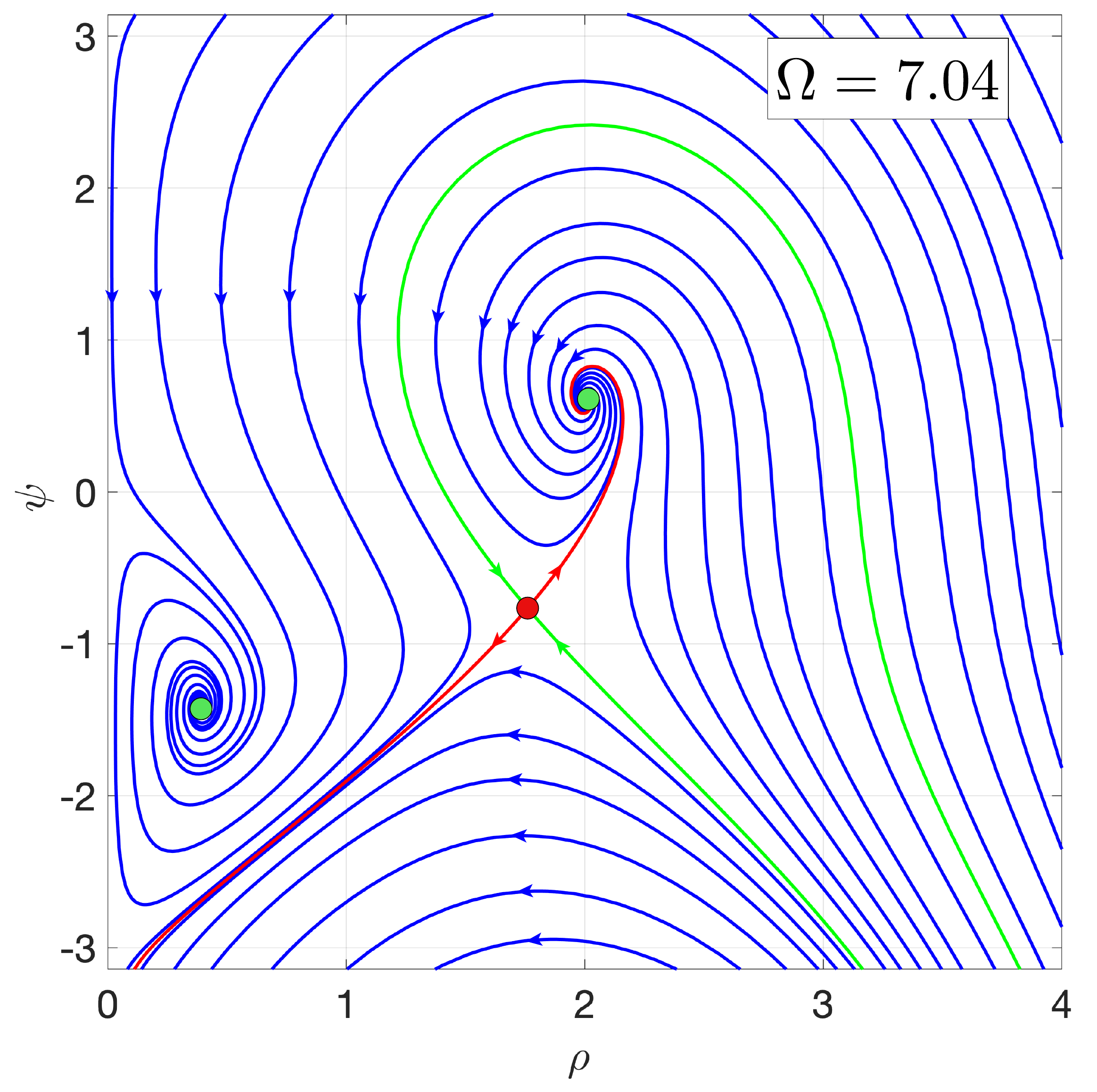}
        \fi
        \caption{}
        \vspace{3mm}
    \end{subfigure}
    \begin{subfigure}{0.45\textwidth}
        \centering
        \if\mycmd1
        \includegraphics[scale=0.25]{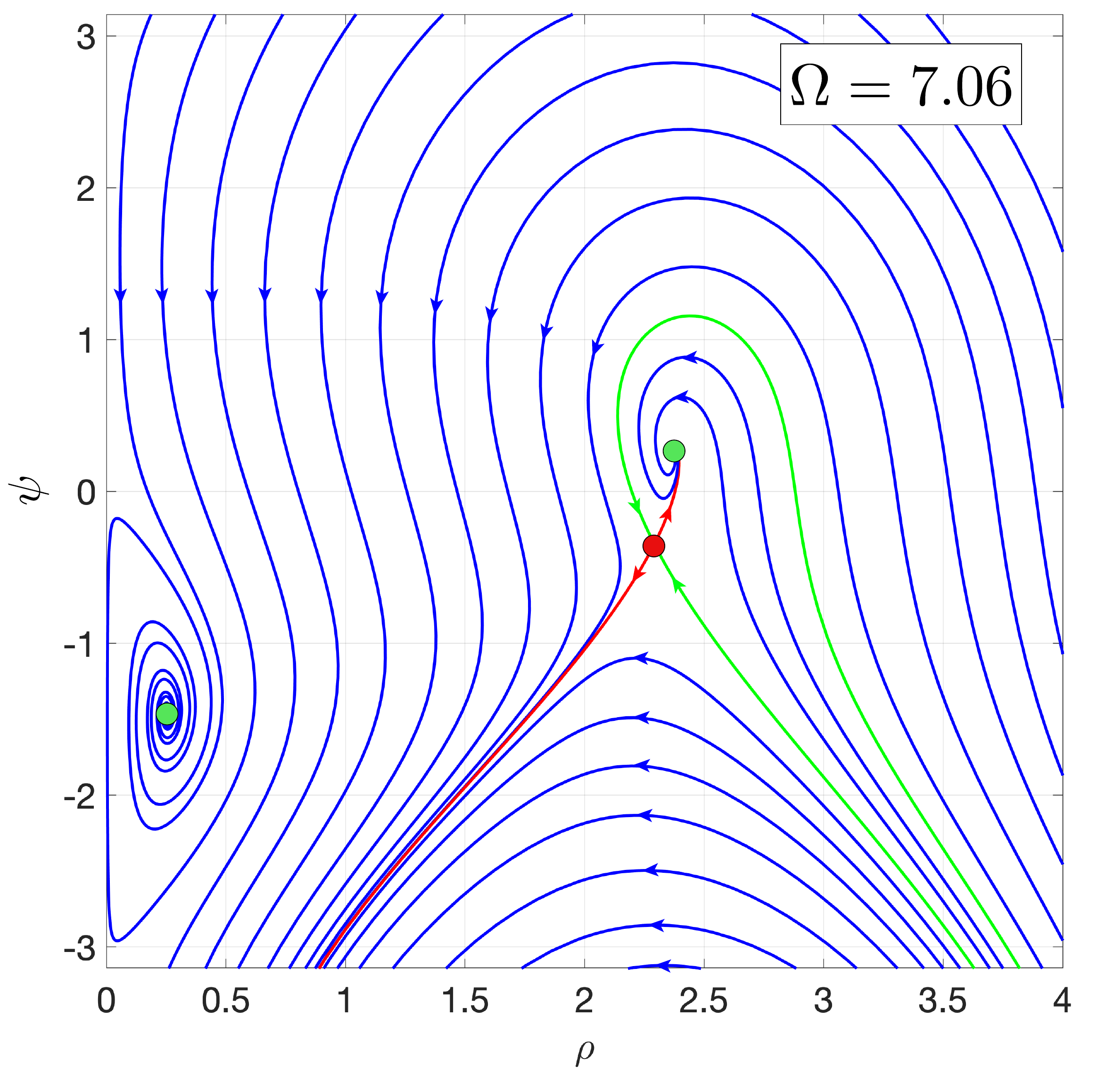}
        \fi
        \caption{}
        \vspace{3mm}
    \end{subfigure}

     \begin{subfigure}{0.45\textwidth}
        \centering
        \if\mycmd1
        \includegraphics[scale=0.25]{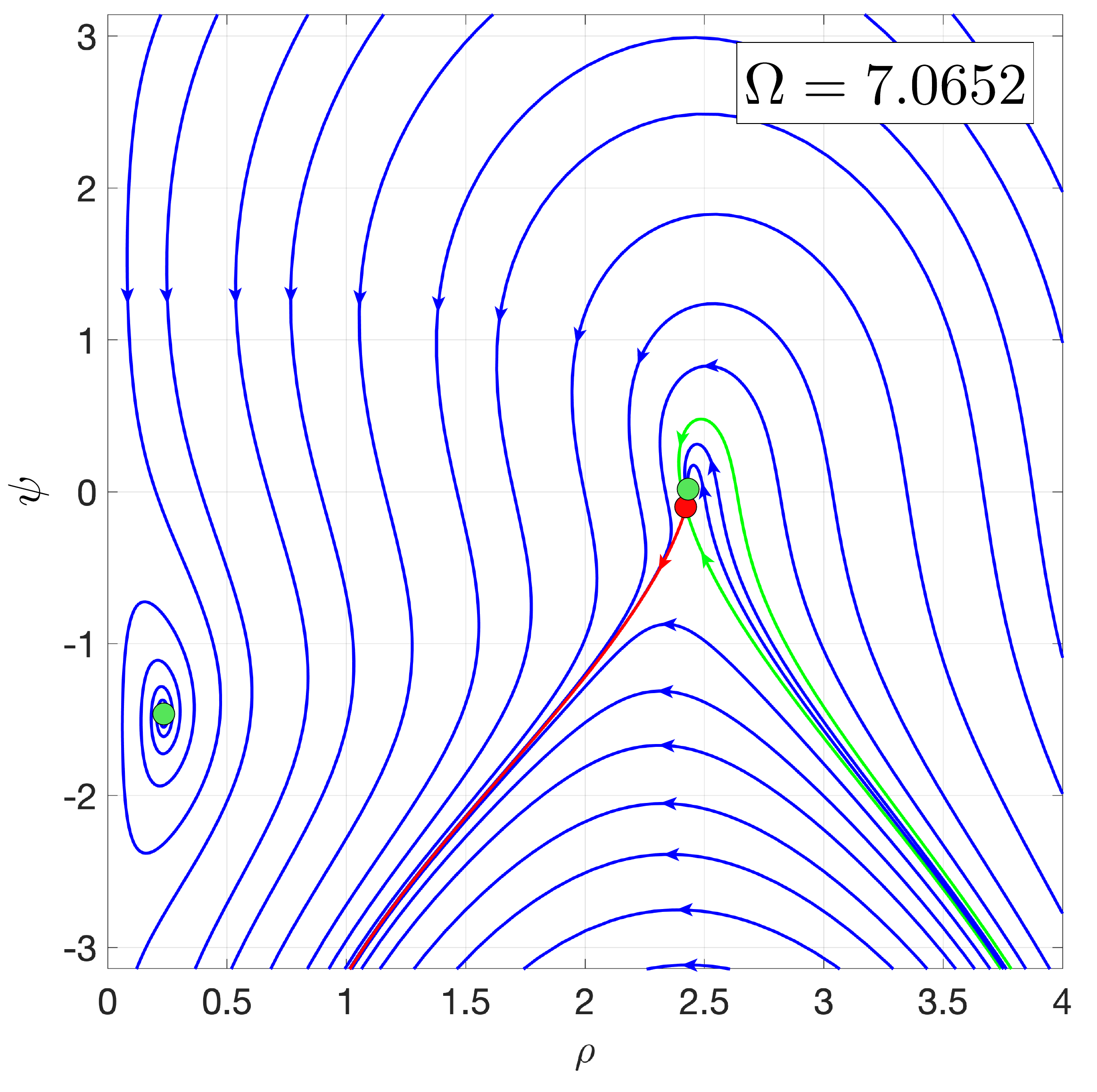}
        \fi
        \caption{}
        \vspace{3mm}
    \end{subfigure}
    \begin{subfigure}{0.45\textwidth}
        \centering
        \if\mycmd1
        \includegraphics[scale=0.25]{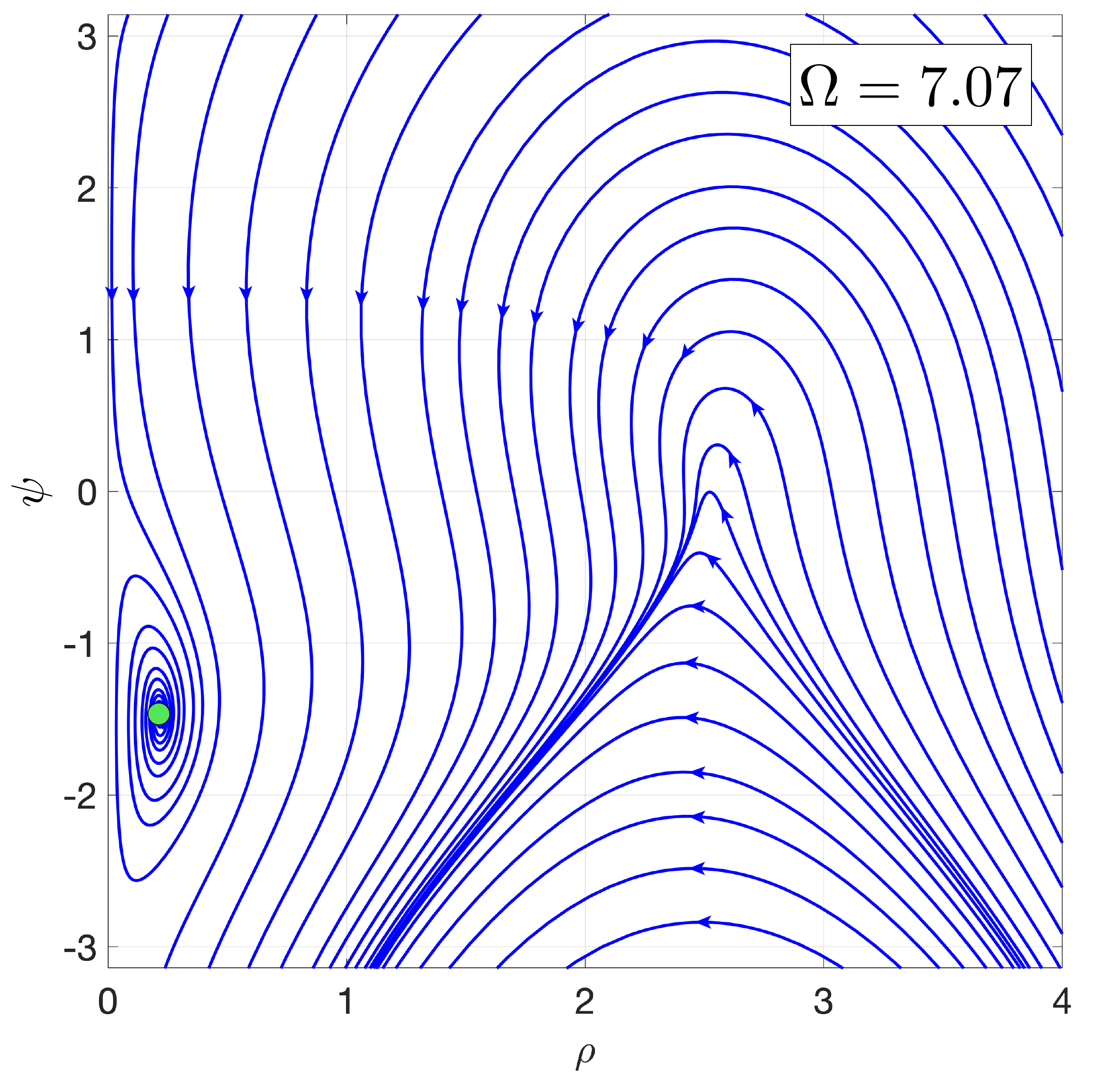}
        \fi
        \caption{}
        \vspace{3mm}
    \end{subfigure}
    \caption{Phase plane of the two-dimensional SSM-reduced system extracted from the 100 degrees-of-freedom beam example for different forcing frequencies $\Omega$ and fixed forcing amplitude $\varepsilon=0.002$. The Figures (a), (b) and (c), the reduced system has a total of three fixed points, of which two are stable spirals and one is a saddle. As the forcing frequency is increased (cf. Fig. (d)),  a saddle-node bifurcation occurs where the two higher-amplitude fixed points collide and annihilate each other. The stable and unstable manifolds of the saddle-type fixed point are shown in green and red. Notice how the domain of attraction of the higher amplitude stable fixed point reduces significantly in area as the forcing frequency is increased, making it harder to end up in this particular fixed point. \label{fig:phase_plane}}
\end{figure}

\section{Conclusion}
In this work, we have used the reduced dynamics on two-dimensional time-periodic spectral submanifolds (SSMs) to extract forced-response curves (FRCs) around the vibration modes of nonlinear non-conservative mechanical systems. We compared the computational times needed to extract such FRCs from systems with an increasing number of degrees of freedom, using SSM theory, the harmonic balance (HB) method and a collocation method implemented in the \texttt{po} toolbox of \coco. 

Varying the number of degrees of freedom, from 10 to a 10,000, we have found that extracting the FRC using the HB method and the collocation method becomes rapidly intractable. However, using \ssm, a 10,000-degree-of-freedom system takes approximately 13 hours to obtain the FRC over a predefined set of frequency values. 

An additional advantage of the present approach is that SSM computations can be parallelized. The frequency domain of interest can be divided into subsets and each computation over such a subset can be sent to a different core. For the 10,000 degrees-of-freedom system, running the \ssm computation in parallel on 20 cores reduces the computational time from 13 hours to approximately 2 hours. These speeds and corresponding degrees of freedom appear certainly out of reach for any other approach that we are aware of for steady-state calculations in periodically forced nonlinear mechanical systems. 

We have visualized the phase space of the two-dimensional SSM-reduced systems. Doing so we have reproduced the behavior commonly observed in experiments: during a frequency sweep of the system, following the higher-amplitude stable periodic solution branch becomes harder near folding points. Indeed, as our analysis reveals, small perturbations can cause the response of the system to escape the domain of attraction of the higher-amplitude stable periodic orbit, ending up in the domain of attraction of the lower-amplitude stable periodic solution. Specifically, the domain of attraction of the higher-amplitude fixed point, for the SSM-reduced system, shrinks in area up to the point where it completely vanishes during a saddle-node bifurcation. 

When the forcing frequency, $\Omega$, and the forcing amplitude, $\varepsilon$, are fixed, we showed that the zeros of the reduced dynamics lie on an ellipse-shaped curve, which gives a new geometric interpretation of the family of periodic orbits of the full system. Additionally, if we reduced our analysis to the setting of Breunung and Haller \cite{Breunung2017} and computed the non-autonomous part of the SSM only up to zeroth order in the parameterization coordinates, the ellipse would reduce to a circle. 

In summary, we find that spectral submanifolds provide a mathematically exact model reduction tool for high-degree-of-freedom nonlinear mechanical systems at previously unthinkable speeds. The reduction method does not require the numerical solution of differential equations: all effort goes into constructing appropriate matrices corresponding to a linear system of equations from which the solution describes the SSM and its reduced dynamics. Locating steady states then requires solving a two-dimensional algebraic system of equations, which is practically instantaneous. 

The main performance limitation for SSM-based model reduction is not processor speed but memory needs, which depends on the structure of the nonlinearities of the mechanical system. On the positive side, the storage requirements for SSM coefficients can be significantly optimized relative to the proof-of-concept approach presented here. This optimization is an improvement of \ssm and is currently ongoing work that will be published in the future. 

\section{Compliance with Ethical Standards}
Conflict of interest: The authors declare that they have no conflict of interest.

\appendix
\section{Proof of Theorem \ref{thrm:coef_eq_auto}\label{app:coef_eq_auto}}
For row $i$, the $\mtrx{k}^\text{th}$-power terms on the right-hand side of Eq. (\ref{eq:auto_SSM}) can be expressed as
\begin{equation}
\left[D_\mtrx{s} \mtrx{W}_0(\mtrx{s})\mtrx{R}_0(\mtrx{s})\right]^{\mtrx{k}}_i = \sum_{j=1}^{2}\sum_{\substack{\mtrx{m}\leq\tilde{\mtrx{k}}_j \\ m_j>0}} m_j W_{i,\mtrx{m}}^0 R_{j,\tilde{\mtrx{k}}_j-\mtrx{m}}^0
\end{equation}
The $\mtrx{k}^\text{th}$-power terms on the left-hand side of the $i^\text{th}$ row of Eq. (\ref{eq:auto_SSM}) can be written as
\begin{align}
\left[\gmtrx{\Lambda}\mtrx{W}_0(\mtrx{s})\right]^\mtrx{k}_i &= \lambda_iW_{i,\mtrx{k}}^0, \\
\left[\mtrx{G}_\text{m}(\mtrx{W}_0(\mtrx{s}))\right]^{\mtrx{k}}_i &=\left[g_i(\mtrx{W}_0(\mtrx{s}))\right]_\mtrx{k}.
\end{align}
where we have made use of the multi-index notation
\begin{equation}
\mtrx{m}\in\mathbb{N}_0^2,\quad \mtrx{k}\in\mathbb{N}_0^2,\quad \tilde{\mtrx{k}}_j = \mtrx{k}+\mtrx{e}_j,
\end{equation}
with $\mtrx{e}_j$ denoting a unit vector. 

The coefficient equation related to the $\textbf{k}^\text{th}$-power term of the $i^\text{th}$ row of the autonomous invariance Eq. (\ref{eq:auto_SSM}) can now be rewritten as
\begin{equation}
\left(\lambda_i-\sum_{j=1}^{2}k_j\lambda_j\right)W_{i,\mtrx{k}}^0 = \sum_{j=1}^{2}\delta_{ij}R_{j,\mtrx{k}}^0 + Q_{i,\mtrx{k}},
\end{equation}
where $Q_{i,\mtrx{k}}$ is defined as
\begin{align}
Q_{i,\mtrx{k}} = \sum_{j=1}^{2}\sum_{\substack{\mtrx{m}\leq\tilde{\mtrx{k}}_j \\ \mtrx{m}\neq \mtrx{e}_j\\ \mtrx{m}\neq\mtrx{k}_{\text{ }} \\ m_j>0}} m_j W_{i,\mtrx{m}}^0 R_{j,\tilde{\mtrx{k}}_j-\mtrx{m}}^0-\left[g_i(\mtrx{W}_0(\mtrx{s}))\right]_\mtrx{k}, \nonumber
\end{align}
which proves the result stated in Theorem \ref{thrm:coef_eq_auto}. \qed

\section{Proof of Theorem \ref{thrm:coef_eq}\label{app:coef_eq}}
Assuming that $\phi\in S^1$, we obtain that for the $i^\text{th}$ row, the $\mtrx{k}^\text{th}$-power terms on the right-hand side of Eq. (\ref{eq:invar_O_eps}) can be expressed as
\begin{align}
\left[D_\mtrx{s}\mtrx{W}_0(\mtrx{s})\mtrx{R}_1(\mtrx{s},\phi)\right]^{\mtrx{k}}_i &= \sum_{j=1}^{2}\sum_{\substack{\mtrx{m}\leq\tilde{\mtrx{k}}_j \\ m_j>0}} m_j W_{i,\mtrx{m}}^0 R_{j,\tilde{\mtrx{k}}_j-\mtrx{m}}^1(\phi), \\
\left[D_\mtrx{s}\mtrx{W}_1(\mtrx{s},\phi)\mtrx{R}_0(\mtrx{s})\right]^{\mtrx{k}}_i &= \sum_{j=1}^{2}\sum_{\substack{\mtrx{m}\leq\tilde{\mtrx{k}}_j \\ m_j>0}} m_j W_{i,\mtrx{m}}^1(\phi)R_{j,\tilde{\mtrx{k}}_j-\mtrx{m}}^0, \\
\left[D_{\phi} \mtrx{W}_1(\mtrx{s},\phi)\Omega\right]^{\mtrx{k}}_i &=D_{\phi}W_{i,\mtrx{k}}^1(\phi)\Omega .
\end{align}
The $\mtrx{k}^\text{th}$-power terms on the left-hand side of the $i^\text{th}$ row of Eq. (\ref{eq:invar_O_eps}) can be written as
\begin{align}
\left[\gmtrx{\Lambda}\mtrx{W}_1(\mtrx{s},\phi)\right]^\mtrx{k}_i &= \lambda_iW_{i,\mtrx{k}}^1(\phi), \\
\left[D_\mtrx{q}\mtrx{G}_\text{m}(\mtrx{W}_0(\mtrx{s}))\mtrx{W}_1(\mtrx{s},\phi)\right]^{\mtrx{k}}_i &=\left[\sum_{j=1}^{2n}D_{q_j}g_i(\mtrx{W}_0(\mtrx{s}))w_j^1(\mtrx{s},\phi)\right]_\mtrx{k}, \\
\left[\mtrx{F}_\text{m}(\phi)\right]^\mtrx{k}_i&=F_{i,\mtrx{k}}(\phi). 
\end{align}
Therefore, the coefficient equation related to the $\textbf{k}^\text{th}$-power term of the $i^\text{th}$ row of the non-autonomous invariance Eq. (\ref{eq:invar_O_eps}) is
\begin{equation}
\left(\lambda_i-\sum_{j=1}^{2}k_j\lambda_j\right)W_{i,\mtrx{k}}^1(\phi)-D_{\phi} W_{i,\mtrx{k}}^1(\phi)\Omega = \sum_{j=1}^{2}\delta_{ij}R_{j,\mtrx{k}}^1(\phi) + P_{i,\mtrx{k}}(\phi),
\end{equation}
where 
\begin{align}
P_{i,\mtrx{k}}(\phi) &= \sum_{j=1}^{2}\sum_{\substack{\mtrx{m}\leq\tilde{\mtrx{k}}_j \\ \mtrx{m}\neq \mtrx{e}_j \\ m_j>0}} m_j W_{i,\mtrx{m}}^0 R_{j,\tilde{\mtrx{k}}_j-\mtrx{m}}^1(\phi)+\sum_{j=1}^{2}\sum_{\substack{\mtrx{m}\leq\tilde{\mtrx{k}}_j \\ \mtrx{m}\neq\mtrx{k} \\ m_j>0}} m_j W_{i,\mtrx{m}}^1(\phi)R_{j,\tilde{\mtrx{k}}_j-\mtrx{m}}^0 \\
&-F_{i,\mtrx{k}}(\phi) -\left[\sum_{j=1}^{2n}D_{q_j}g_i(\mtrx{W}_0(\mtrx{s}))w_j^1(\mtrx{s},\phi)\right]_\mtrx{k}, \nonumber
\end{align}
which concludes the proof of Theorem \ref{thrm:coef_eq}.\qed
\section{Proof of Theorem \ref{thm:red_dyn}\label{app:red_dyn}}
The $\mathcal{O}(\varepsilon)$ approximation of the reduced dynamics for $\mtrx{s}$ can be written as
\begin{equation}
\dot{\mtrx{s}} = \mtrx{R}(\mtrx{s},\phi)=\mtrx{R}_0(\mtrx{s})+\varepsilon\mtrx{R}_1(\mtrx{s},\phi),\label{eq:red_eq_total}
\end{equation}
where the first row of Eq. (\ref{eq:red_eq_total}) takes the form
\begin{align}
\dot{s}_1 &= \lambda_1 s_1 + \sum_{i=1}^M\gamma_is_1^{i+1}\bar{s}_1^{i} \\
&+\varepsilon\left(c_{1,\mtrx{0}}\me^{\mi\phi} + \sum_{i=1}^M\left(c_{1,(i,i)}(\Omega)s_1^i\bar{s}_1^i\me^{\mi\phi}+d_{1,(i+1,i-1)}(\Omega)s_1^{i+1}\bar{s}_1^{i-1}\me^{-\mi\phi}\right) \right), \nonumber
\end{align}
Introducing a change to polar coordinates, $s_1=\rho\me^{\mi\theta}$, $\bar{s}_1=\rho\me^{-\mi\theta}$, dividing by $\me^{\mi\theta}$ and introducing the new phase coordinate $\psi = \theta-\phi$, we obtain
\begin{align}
\dot{\rho}+\mi\rho(\dot{\psi}+\Omega) &= \lambda_1 \rho + \sum_{i=1}^M\gamma_i\rho^{2i+1} \label{eq:polar_red_dyn} \\
&+\varepsilon\left(c_{1,\mtrx{0}}\me^{-\mi\psi} + \sum_{i=1}^M\left(c_{1,(i,i)}(\Omega)\rho^{2i}\me^{-\mi\psi}+d_{1,(i+1,i-1)}(\Omega)\rho^{2i}\me^{\mi\psi}\right) \right). \nonumber
\end{align}
We obtain the result listed in Theorem \ref{thm:red_dyn} by splitting Eq. (\ref{eq:polar_red_dyn}) into its real and imaginary part. \qed

\section{Multivariate recurrence relations \label{app:recc}}
\subsection{Products}
The $i$th row on the right hand side of the $\mathcal{O}(1)$ coefficient equation can be written as\begin{equation}
\sum_{j=1}^{2}\partial_{s_j}w_i^0(\mtrx{s})r_j^0(\mtrx{s}) = \sum_{j=1}^{2}\left(\sum_{\substack{\mtrx{m} \\ m_j>0}}m_j W_{i,\mtrx{m}}^0\mtrx{s}^{\mtrx{m}-\mtrx{e}_j}\sum_{\mtrx{n}}R_{j,\mtrx{n}}^0\mtrx{s}^\mtrx{n}\right)
\end{equation}
The $\mtrx{k}$th power coefficient of this resulting product is recursively defined as
\begin{equation}
\left[\sum_{j=1}^{2}\partial_{s_j}w_i^0(\mtrx{s})r_j^0(\mtrx{s})\right]_{\mtrx{k}} = \sum_{j=1}^{2}\sum_{\substack{\mtrx{m}\leq\tilde{\mtrx{k}}_{j} \\ m_j>0}}m_j W_{i,\mtrx{m}}^0R_{j,\tilde{\mtrx{k}}_{j}-\mtrx{m}}^0 \label{eq:kth_coef}
\end{equation}
\begin{example}\label{ex:prod} To demonstrate how the product in Eq. (\ref{eq:kth_coef}) is carried out in \ssm, we assume that we have the following \textit{arbitrary} polynomial functions for the autonomous SSM and autonomous reduced dynamics, which already has been computed up to order $|\mtrx{k}|=3$, where $i=1$,
\begin{align}
w_1^0(\mtrx{s}) = \alpha s_1^3 + \beta s_1^2 s_2,\quad r_1^0(\mtrx{s}) = \gamma s_2^2 + \delta s_1 s_2,\quad r_2^0(\mtrx{s}) = \varepsilon s_2^2, 
\end{align}
We want to compute the coefficient related to the monomial term $\mtrx{k}=(2,2)$, which corresponds to order $|\mtrx{k}|=4$. Using Eq. (\ref{eq:kth_coef}), we write
\begin{equation}
\left[\sum_{j=1}^{2}\partial_{s_j}w_1^0(\mtrx{s})r_j^0(\mtrx{s})\right]_{(2,2)} = \sum_{\substack{\mtrx{m}\leq (3,2) \\ m_1>0}}m_1 W_{1,\mtrx{m}}^0R_{1,(3,2)-\mtrx{m}}^0 + \sum_{\substack{\mtrx{m}\leq (2,3) \\ m_2>0}}m_2 W_{1,\mtrx{m}}^0R_{2,(2,3)-\mtrx{m}}^0 \label{eq:prod_recurrence}
\end{equation}
To increase the efficiency and reduce the total computational time and memory usage, the updated version of \ssm keeps track of all the non-zero coefficients in  $w_1^0(\mtrx{s})$, $r_1^0(\mtrx{s})$ and $r_2^0(\mtrx{s})$. This way, instead of carrying out the full summations in Eq. (\ref{eq:kth_coef}), we can selectively carry out the products from which we know in advance that these terms will give a contribution to the current coefficient of interest. The entries of the non-zero coefficients for each polynomial function are listed in an individual vector and stored in \matlab,
\begin{equation}
W_{1,\text{index}}^0 = 
\begin{bmatrix}
(3,0) \\
(2,1)
\end{bmatrix}, \quad 
R_{1,\text{index}}^0 = 
\begin{bmatrix}
(0,2) \\
(1,1)
\end{bmatrix}, \quad 
R_{2,\text{index}}^0 = 
\begin{bmatrix}
(0,2)
\end{bmatrix}.
\end{equation}
From this we conclude that for the first summation term on the right hand side of Eq. (\ref{eq:prod_recurrence}), the absolute maximum number of iterations that we possibly have to perform are two, related to the terms $\mtrx{m}=(3,0)$ and $\mtrx{m}=(2,1)$, as these are the only currently non-zero terms in $w_1^0(\mtrx{s})$. Depending on the non-zero coefficients of the reduced dynamics, the number of iterations needed either remains the same or decreases. The coefficients, related to $r_1^0(\mtrx{s})$, that are needed in the summation are 
\begin{equation}
R_{1,(3,2)-(3,0)}^0 = R_{1,(0,2)}^0,\quad R_{1,(3,2)-(2,1)}^0 = R_{1,(1,1)}^0,
\end{equation}
which both are non-zero in this particular example. Therefore, we can write 
\begin{align}
\sum_{\substack{\mtrx{m}\leq (3,2) \\ m_1>0}}m_1 W_{1,\mtrx{m}}^0R_{1,(3,2)-\mtrx{m}}^0 = 3W_{1,(3,0)}^0R_{1,(0,2)}^0+2W_{1,(2,1)}^0R_{1,(1,1)}^0 = 3\alpha\gamma + 2\beta\delta.
\end{align}
For the second summation term on the right hand side of Eq. (\ref{eq:prod_recurrence}), the maximum number of iterations that we possibly have to perform is one, corresponding to $\mtrx{m}=(2,1)$, as it is required that $m_2>0$, which is not the case for $\mtrx{m}=(3,0)$. Again, depending on the coefficients of the reduced dynamics, it is possible that less iterations are needed. The coefficients, related to $r_2^0(\mtrx{s})$, that are needed in the summation are 
\begin{equation}
R_{2,(2,3)-(2,1)}^0 = R_{2,(0,2)}^0,
\end{equation}
which is non-zero in this particular example. We can express the second summation term on the right hand side of Eq. (\ref{eq:prod_recurrence}) as
\begin{equation}
 \sum_{\substack{\mtrx{m}\leq (2,3) \\ m_2>0}}m_2 W_{1,\mtrx{m}}^0R_{2,(2,3)-\mtrx{m}}^0 = W_{1,(2,1)}^0R_{2,(0,2)}^0 = \beta\varepsilon.
\end{equation}
Therefore, the coefficient related to the term $\mtrx{k}=(2,2)$ of the product $\sum_{j=1}^{2}\partial_{s_j}w_1^0(\mtrx{s})r_j^0(\mtrx{s})$, is equal to 
\begin{equation}
\left[\sum_{j=1}^{2}\partial_{s_j}w_1^0(\mtrx{s})r_j^0(\mtrx{s})\right]_{(2,2)} = 3\alpha\gamma + 2\beta\delta + \beta\varepsilon.
\end{equation}
For verification, we manually compute the product
\begin{equation}
\sum_{j=1}^{2}\partial_{s_j}w_1^0(\mtrx{s})r_j^0(\mtrx{s}) = \left(3\alpha\gamma+2\beta\delta+\beta\varepsilon\right)s_1^2 s_2^2 + \mathcal{O}(|\mtrx{s}|^4).
\end{equation}
which agrees with our result. 
\end{example}
\subsection{Compositions}
The $i$th row of the composition on the left hand side of Eq. (\ref{eq:auto_SSM}) can be written as 
\begin{equation}
h(\mtrx{s})_a = \sum_{\mtrx{k}}H_{a,\mtrx{k}}\mtrx{s}^\mtrx{k} = (w_i^0(\mtrx{s}))^a=\left(\sum_{\mtrx{m}}W_{i,\mtrx{m}}^0\mtrx{s}^\mtrx{m}\right)^a. \label{eq:rec_comp_init}
\end{equation}
We want to obtain the coefficient related to the term $\mtrx{k}\neq0$ of this composition. We pick an index $j$, such that $k_j=\text{min}(k_l:k_l\neq 0)$ and differentiate Eq. (\ref{eq:rec_comp_init}) with respect to $s_{j}$, yielding
\begin{equation}
\partial_{s_{j}} h(\mtrx{s}) = a(w_i^0(\mtrx{s}))^{a-1}\partial_{s_{j}}w_i^0(\mtrx{s})=ah(\mtrx{s})_{a-1}\partial_{s_{j}}w_i^0(\mtrx{s}),
\end{equation}
which is equivalent to
\begin{equation}
\sum_{\substack{\mtrx{k}\\k_j>0}}k_jH_{a,\mtrx{k}}\mtrx{s}^{\mtrx{k}-\mtrx{e}_j} = a\sum_{\mtrx{n}}H_{a-1,\mtrx{n}}\mtrx{s}^\mtrx{n}\sum_{\substack{\mtrx{m} \\ m_j>0}}m_j W_{i,\mtrx{m}}^0\mtrx{s}^{\mtrx{m}-\mtrx{e}_j}. \label{eq:rec_comp}
\end{equation}
Collecting the coefficient corresponding to the monomial term $\mtrx{s}^{\mtrx{k}-\mtrx{e}_j}$ on each side of Eq. (\ref{eq:rec_comp}) yields the coefficient related to the $\mtrx{k}\neq 0$ term of Eq. (\ref{eq:rec_comp_init}), 
\begin{equation}
H_{a,\mtrx{k}}=\frac{a}{k_j}\sum_{\substack{\mtrx{m}\leq\mtrx{k} \\ m_j>0}}m_jW_{i,\mtrx{m}}^0H_{a-1,\mtrx{k}-\mtrx{m}}. \label{eq:comp_result}
\end{equation}
\begin{example}
We give an demonstration of Eq (\ref{eq:comp_result}), where we will use the same polynomial function $w_1^0(\mtrx{s})$ as in Example \ref{ex:prod},
\begin{align}
w_1^0(\mtrx{s}) = \alpha s_1^3 + \beta s_1^2 s_2.
\end{align}
Assume we are interested in the coefficient related to the monomial term $\mtrx{k}=(5,1)$ of the square of $w_1^0(\mtrx{s})$, i.e. where $a=2$. We choose $j=2$ such that we minimize the number of iterations needed. Then using Eq. (\ref{eq:comp_result}) we can write
\begin{equation}
H_{2,(5,1)}=\frac{2}{1}\sum_{\substack{\mtrx{m}\leq (5,1) \\ m_2>0}}m_2W_{1,\mtrx{m}}^0H_{1,(5,1)-\mtrx{m}},
\end{equation}
where we note that $H_{1,\mtrx{m}}$ is equal to $W_{1,\mtrx{m}}^0$. The entries of the non-zero coefficients for $w_1^0(\mtrx{s})$ are listed in an individual vector,
\begin{equation}
W_{1,\text{index}}^0 = 
\begin{bmatrix}
(3,0) \\
(2,1)
\end{bmatrix}.
\end{equation}
From this we conclude that the absolute maximum number of iterations that we possibly have to perform are two, related to the terms $\mtrx{m}=(3,0)$ and $\mtrx{m}=(2,1)$, as these are the only currently non-zero terms in $w_1^0(\mtrx{s})$. However, taking a closer look, we obverse that for $\mtrx{m}=(3,0)$, $m_2=0$, and therefore this index is excluded from the summation. Summing over the remaining index $\mtrx{m}=(2,1)$, we obtain
\begin{equation}
H_{2,(5,1)}=\frac{2}{1}\sum_{\substack{\mtrx{m}\leq (5,1)\\ m_2>0}}m_2W_{1,\mtrx{m}}^0W_{1,(5,1)-\mtrx{m}}^0 = 2\alpha\beta.
\end{equation}
To verify this result, we manually compute the square of $w_1^0(\mtrx{s})$,
\begin{equation}
(w_1^0(\mtrx{s}))^2 = 2\alpha\beta s_1^5 s_2 + \mathcal{O}(|\mtrx{s}|^6).
\end{equation}
\end{example}

\section{A geometric interpretation of the fixed points of the reduced dynamics  \label{app:geometric}}
We can interpret the zero problem (\ref{eq:zeroproblem}) in a geometric way by multiplying $F_1(\mtrx{u})$ and $F_2(\mtrx{u})$ with  $g_1\neq 0$ and  $f_2\neq 0$, respectively,  and rewriting the result as
\begin{align}
\mtrx{s}(\rho,\Omega,\psi)
&=
\underbrace{\begin{bmatrix}
\cos(\psi)& \sin(\psi)\\
-\sin(\psi) & \cos(\psi)
\end{bmatrix}}_{\mtrx{R}(\psi)}
\underbrace{\begin{bmatrix}
f_2 g_2\\
f_2 g_1
\end{bmatrix}}_{\mtrx{v}_1} +
\underbrace{
\begin{bmatrix}
f_1 g_1-f_2 g_2 \\
0
\end{bmatrix}}_{\mtrx{v}_2} \cos(\psi) \label{eq:zero_rot} \\
&=
\underbrace{-\frac{1}{\varepsilon}
\begin{bmatrix}
g_1 a\\
f_2 (b-\Omega)\rho
\end{bmatrix}}_{\mtrx{v}_3}, \nonumber
\end{align}
where we introduced the rotation matrix $\mtrx{R}(\psi)\in\text{SO}(2)$. For a fixed value of $\rho_0$, $\Omega_0$ and $0\leq\psi<2\pi$, $\mtrx{s}(\rho_0,\Omega_0,\psi)$ represents an ellipse with semi-major and semi-minor axes, $\norm{\mtrx{s}(\rho_0,\Omega_0,\psi_1)}$ and $\norm{\mtrx{s}(\rho_0,\Omega_0,\psi_2)}$, respectively, where 
\begin{equation}
\psi_1 = \argmax{0\leq\psi\leq\pi}\norm{\mtrx{s}(\rho_0,\Omega_0,\psi)},\quad 
\psi_2 = \argmin{0\leq\psi\leq\pi}\norm{\mtrx{s}(\rho_0,\Omega_0,\psi)}. \nonumber
\end{equation}
We can always solve Eq. (\ref{eq:zero_rot}) by scaling  the length of $\mtrx{v}_3$ (varying $\varepsilon$) such that $\mtrx{v}_3$ points to a point on the ellipse $\vec{s}(\rho_0, \Omega_0, \psi)$. This intersection point then defines a $\psi$ value for which Eq. (\ref{eq:zero_rot}) is satisfied. Each point where $\mtrx{s}$ and $\mtrx{v}_3$ coincide for different values of $\rho$ gives a point on the forced-response curve. An illustration of this concept is shown in Fig. \ref{fig:ellipse_1}, where $\mtrx{v}_3$ intersects $\mtrx{s}$ a total of three times for increasing $\rho$. These three intersections correspond to three points on the forced-response curve for a fixed forcing frequency $\Omega$ and fixed forcing amplitude $\varepsilon$.

\begin{figure}[h!]
\centering
    \begin{subfigure}{0.45\textwidth}
        \centering
        \if\mycmd1
        \includegraphics[scale=0.33]{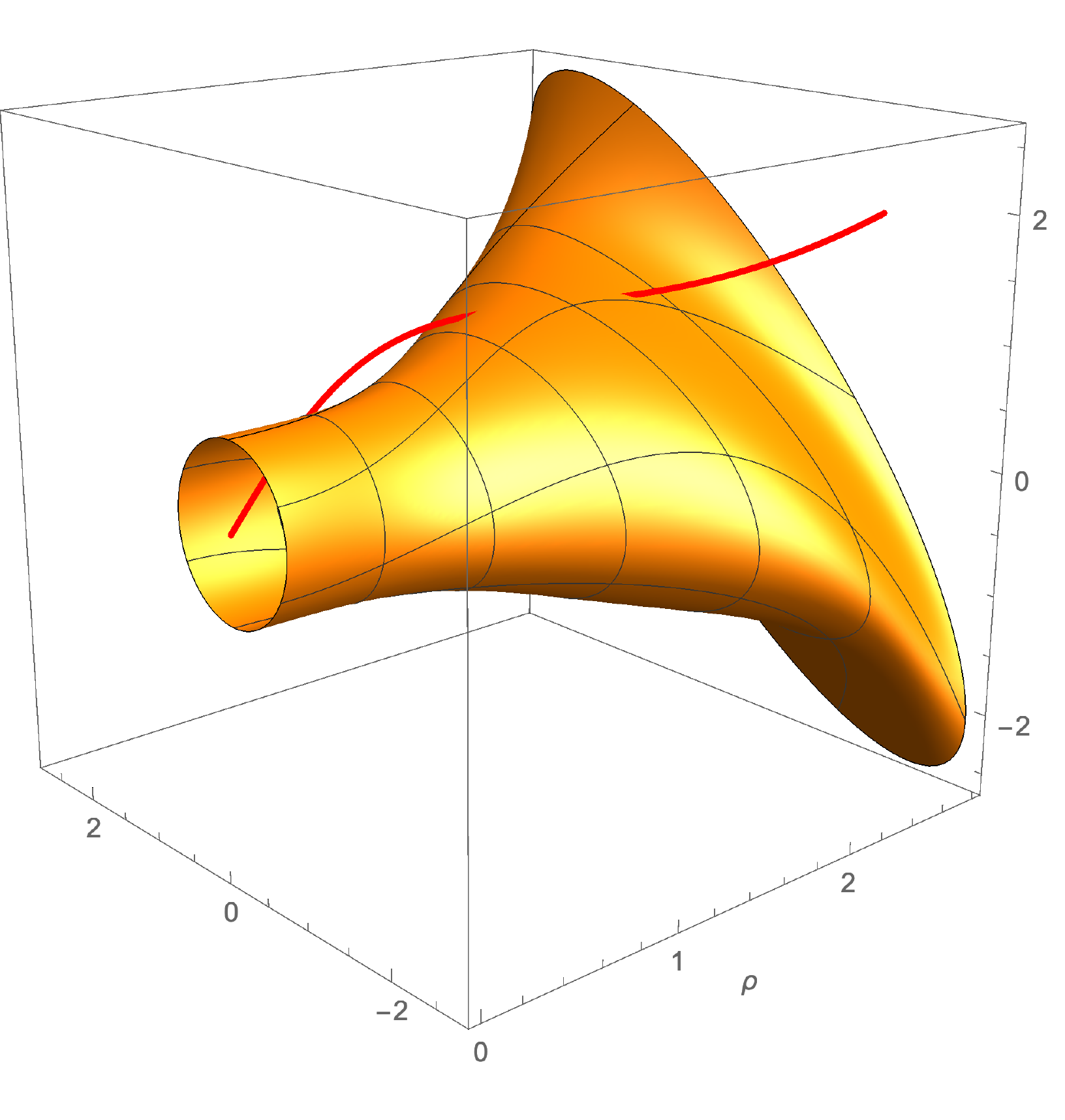}
        \fi
    \end{subfigure}
    \begin{subfigure}{0.45\textwidth}
        \centering
        \if\mycmd1
        \includegraphics[scale=0.31]{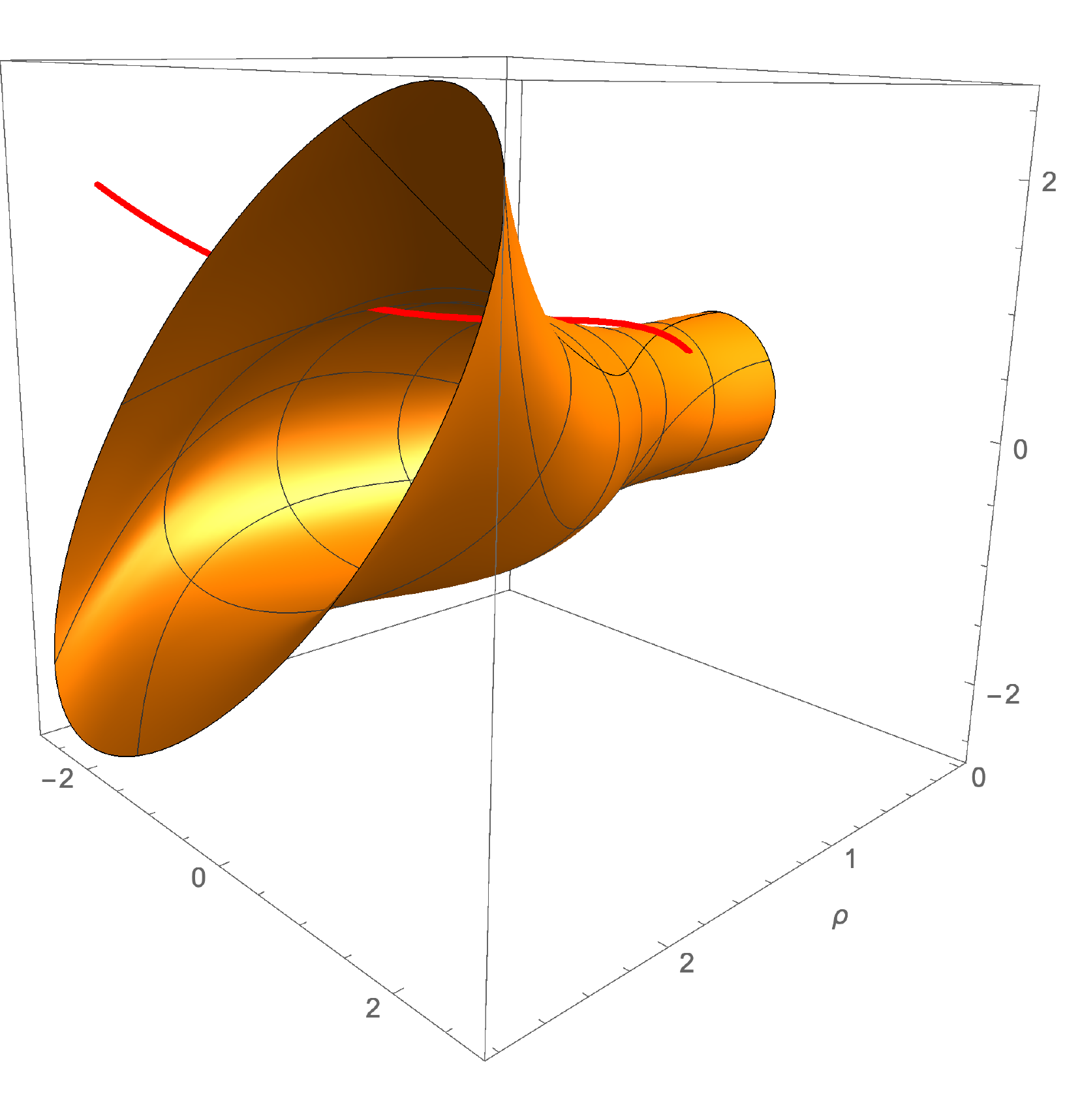}
        \fi
    \end{subfigure}
    \caption{Illustration of $\mtrx{s}(\rho,\Omega,\psi)$ and $\mtrx{v}_3$ for a fixed forcing frequency $\Omega$, $\psi\in[0, 2\pi)$, while varying $\rho$. The points where $\mtrx{s}(\rho,\Omega,\psi)$ and $\mtrx{v}_3$ coincide for different values of $\rho$ will each correspond to a point on the forced-response curve. \label{fig:ellipse_1}}
\end{figure}

We will show that for a mechanical system with symmetric system matrices and with structural damping, we can always pick a modal transformation matrix $\mtrx{T}$, such that $g_1$ and $f_2$ will have a non-zero constant part. 

As seen in Eq. (\ref{eq:c0}), the zeroth-order constant, $c_{1,\mtrx{0}}$, is equal to the first element of the vector $\tilde{\mtrx{F}}_\mtrx{0}/2$, which is extracted from the modal force vector
\begin{equation}
\mtrx{F}_\text{m}(\phi) = \tilde{\mtrx{T}}^{-1}
\begin{bmatrix}
\mtrx{0}\\
\mtrx{M}^{-1}\mtrx{f}(\phi)
\end{bmatrix}=\frac{\tilde{\mtrx{F}}_{\mtrx{0}}}{2}\left(\me^{\mi\phi}+\me^{-\mi\phi}\right). \label{eq:modalforce}
\end{equation}
For a mechanical system with symmetric system matrices and with structural damping, following \cite{Breunung2017}, we introduce a mass normalized real modal transformation matrix $\mtrx{E}$, defined in terms of the quantities in the second-order system (\ref{eq:mech_sys}) as follows:
\begin{gather}
(\mtrx{M}^{-1}\mtrx{K})\mtrx{E}=\mtrx{E}\text{ diag}(\omega_1^2,\ldots,\omega_n^2), \nonumber \\
\mtrx{E}^{\top}\mtrx{M}\mtrx{E}=\mtrx{I},\quad \mtrx{E}^{\top}\mtrx{C}\mtrx{E}=\text{diag}(\beta_1,\ldots,\beta_n),\quad \mtrx{E}^{\top}\mtrx{K}\mtrx{E}=\text{diag}(\omega_1^2,\ldots,\omega_n^2). \nonumber
\end{gather}
Here the eigenvalues of the linearized part of system (\ref{eq:dyn_sys}) are given by 
\begin{equation}
\lambda_{2i-1} = -\frac{\beta_i}{2}+\sqrt{\left(\frac{\beta_i}{2}\right)^2-\omega_i^2},\quad \lambda_{2i} = -\frac{\beta_i}{2}-\sqrt{\left(\frac{\beta_i}{2}\right)^2-\omega_i^2},\quad i=1,\ldots,n.
\end{equation}
We now introduce the modal transformation matrix $\hat{\mtrx{T}}$ that will diagonalize the linear matrix $\mtrx{A}$ in (\ref{eq:dyn_sys}), i.e., we let
\begin{gather}
\hat{\mtrx{T}}=
\begin{bmatrix}
\mtrx{E} & \mtrx{E} \\
\mtrx{E}\gmtrx{\Lambda}_1& \mtrx{E}\gmtrx{\Lambda}_2
\end{bmatrix},\quad \hat{\gmtrx{\Lambda}}=\hat{\mtrx{T}}^{-1}\mtrx{A}\hat{\mtrx{T}}=
\begin{bmatrix}
\gmtrx{\Lambda}_1 & \mtrx{0} \\
\mtrx{0} & \gmtrx{\Lambda}_2
\end{bmatrix}\\
\gmtrx{\Lambda}_1 = \text{diag}(\lambda_1,\lambda_3,\ldots,\lambda_{2n-1}),\quad \gmtrx{\Lambda}_2=\text{diag}(\lambda_2,\lambda_4,\ldots,\lambda_{2n}) = \bar{\gmtrx{\Lambda}}_1. \nonumber
\end{gather}
The inverse of the modal transformation matrix $\hat{\mtrx{T}}$ is given by
\begin{gather}
\hat{\mtrx{T}}^{-1}=
\begin{bmatrix}
\mtrx{E}^{-1}+(\mtrx{\Lambda}_2-\mtrx{\Lambda}_1)^{-1}\mtrx{E}^{-1}\gmtrx{\Lambda}_1\mtrx{E}^{-1} & -(\mtrx{\Lambda}_2-\mtrx{\Lambda}_1)^{-1}\mtrx{E}^{-1} \\
(\mtrx{\Lambda}_2-\mtrx{\Lambda}_1)^{-1}\mtrx{E}^{-1}\gmtrx{\Lambda}_1\mtrx{E}^{-1} & (\mtrx{\Lambda}_2-\mtrx{\Lambda}_1)^{-1}\mtrx{E}^{-1}
\end{bmatrix}.
\end{gather}
We observe that the last $n$ columns of $\hat{\mtrx{T}}^{-1}$ are purely imaginary. Note that the current ordering of the columns of $\hat{\mtrx{T}}$, will result in a diagonalized matrix $\hat{\gmtrx{\Lambda}}$ with a different column ordering as compared to $\gmtrx{\Lambda}$ in (\ref{eq:ds_diag}). However, we can always reorder the columns of $\hat{\mtrx{T}}$ to $\tilde{\mtrx{T}}$ such that we obtain the original diagonalized matrix $\gmtrx{\Lambda}$, without altering the fact that the last $n$ columns of $\tilde{\mtrx{T}}^{-1}$ will be imaginary. This is due to the fact that a reordering of the columns of a full rank matrix $\mtrx{P}$ will result in a reordering of the rows of $\mtrx{P}^{-1}$, but not the columns of $\mtrx{P}^{-1}$. 

As a result, the vector $\tilde{\mtrx{F}}_{\mtrx{0}}$ will be purely imaginary as can be seen from Eq. (\ref{eq:modalforce}), and, consequently, the zeroth order constant $c_{1,\mtrx{0}}$ in (\ref{eq:c0}) will be purely imaginary. Additionally, the first $n$ rows of $\tilde{\mtrx{T}}$ are real (as $\tilde{\mtrx{T}}$ is only a column shifted version of $\hat{\mtrx{T}}$), meaning that the if we map a fixed point for the reduced system back to the full phase space, we observe that the leading order linear term in $\rho$, corresponding to a positional coordinate $y_i$ of the full system,  will have a phase shift of $\psi$ with respect to the forcing, i.e.
\begin{align}
y_i &= [\tilde{\mtrx{T}}]_{i,1}\rho\me^{\mi(\phi+\psi)}+[\tilde{\mtrx{T}}]_{i,2}\rho\me^{-\mi(\phi+\psi)}+ \mathcal{O}(|\rho|^2,\varepsilon) \nonumber\\
&= [\tilde{\mtrx{T}}]_{i,1}\rho\left(\me^{\mi(\phi+\psi)}+\me^{-\mi(\phi+\psi)}\right) + \mathcal{O}(|\rho|^2,\varepsilon),\quad i=1,\ldots,n, \nonumber
\end{align}
provided that $[\tilde{\mtrx{T}}]_{i,1}=[\tilde{\mtrx{T}}]_{i,2}\neq 0$. No additional phase is introduced by the coefficients of the modal transformation matrix for the positional coordinates $y_i$, as all the coefficients are real.

In the setting of Breunung and Haller \cite{Breunung2017}, where the parameterization $\mtrx{W}(\mtrx{s},\phi)$ and the reduced dynamics $\mtrx{R}(\mtrx{s},\phi)$ are truncated at $\mathcal{O}(\varepsilon|\mtrx{s}|,\varepsilon^2)$, which is justified when  $\mtrx{s}=\mathcal{O}(\varepsilon^{\frac{1}{2M+2}})$, the zero problem (\ref{eq:zeroproblem}) can be written as
\begin{equation}
\tilde{\mtrx{F}}(\mtrx{u}) = 
\begin{bmatrix}
\tilde{F}_1(\mtrx{u})\\
\tilde{F}_2(\mtrx{u})
\end{bmatrix} = 
\begin{bmatrix}
a(\rho) + \varepsilon\left(\text{Re}(c_{1,\mtrx{0}})\cos(\psi)+\text{Im}(c_{1,\mtrx{0}})\sin(\psi)\right) \\
(b(\rho)-\Omega)\rho + \varepsilon\left(\text{Im}(c_{1,\mtrx{0}})\cos(\psi)-\text{Re}(c_{1,\mtrx{0}})\sin(\psi)\right) 
\end{bmatrix} = \mtrx{0}.
\end{equation}
The ellipse $\mtrx{s}$ reduces to a circle
\begin{equation}
\underbrace{\begin{bmatrix}
\cos(\psi)& \sin(\psi)\\
-\sin(\psi) & \cos(\psi)
\end{bmatrix}}_{\mtrx{R}(\psi)}
\underbrace{\begin{bmatrix}
\text{Re}(c_{1,\mtrx{0}}) \\
\text{Im}(c_{1,\mtrx{0}})
\end{bmatrix}}_{\mtrx{v}_1}
=
\underbrace{-\frac{1}{\varepsilon}
\begin{bmatrix}
a(\rho)\\
(b(\rho)-\Omega)\rho
\end{bmatrix}}_{\mtrx{v}_2}. \label{eq:zero_rot_0}
\end{equation}
In their setting, at the intersection of the FRC with the autonomous backbone curve, i.e., where $b(\rho)-\Omega=0$, the vectors $\mtrx{v}_1$ and $\mtrx{v}_2$ are orthogonal with respect to each other, due to the fact the real part of $c_{1,\mtrx{0}}$ is zero. Therefore, the phase shift $\psi$ will be equal to $\pi/2$.
\bibliographystyle{unsrt} 
\bibliography{ref.bib}

\begin{thebibliography}{10}

\bibitem{guyan1965reduction}
R.J. Guyan.
\newblock {Reduction of stiffness and mass matrices}.
\newblock {\em AIAA journal}, 3(2):380--380, 1965.

\bibitem{irons1965structural}
B.~Irons.
\newblock {Structural eigenvalue problems-elimination of unwanted variables}.
\newblock {\em AIAA journal}, 3(5):961--962, 1965.

\bibitem{Geradin2014}
M.~G{\'e}radin and D.J. Rixen.
\newblock {\em {Mechanical vibrations: theory and application to structural
  dynamics}}.
\newblock John Wiley \& Sons, 2014.

\bibitem{craig1968coupling}
R.~Craig and M.~Bampton.
\newblock {Coupling of substructures for dynamic analyses}.
\newblock {\em AIAA journal}, 6(7):1313--1319, 1968.

\bibitem{Karhunen1946}
K.~Karhunen.
\newblock {\"U}ber lineare methoden in der wahrscheinlichkeitsrechnung.
\newblock {\em Annals of Academic Science Fennicae, Series A1 Mathematics and
  Physics}, 37:3--79, 1946.

\bibitem{Kosambi1943}
D.~Kosambi.
\newblock Statistics in function space.
\newblock {\em Journal of Indian Mathematical Society}, 7:76--88, 1943.

\bibitem{Loeve1948}
M.~Loeve.
\newblock {\em Fonctions Al{\'e}atoires du Second Ordre}.
\newblock Processus stochastiques et mouvement Brownien, Gauthier-Villars,
  Paris, 1948.

\bibitem{Obukhov1954}
M.~A. Obukhov.
\newblock Statistical description of continuous fields.
\newblock {\em Transactions of the Geophysical International Academy Nauk
  USSR}, 24:3--42, 1954.

\bibitem{Pougachev1953}
V.~S. Pougachev.
\newblock General theory of the correlations of random functions.
\newblock {\em Izvestiya Akademii Nauk USSR}, 17:1401--1402, 1953.

\bibitem{idelsohn1985reduction}
S.R. Idelsohn and A.~Cardona.
\newblock {A reduction method for nonlinear structural dynamic analysis}.
\newblock {\em Computer Methods in Applied Mechanics and Engineering},
  49(3):253--279, 1985.

\bibitem{haller2017exact}
G.~Haller and S.~Ponsioen.
\newblock {Exact model reduction by a slow--fast decomposition of nonlinear
  mechanical systems}.
\newblock {\em Nonlinear Dynamics}, 90(1):617--647, 2017.

\bibitem{Haller2016}
G.~Haller and S.~Ponsioen.
\newblock {Nonlinear normal modes and spectral submanifolds: existence,
  uniqueness and use in model reduction}.
\newblock {\em Nonlinear Dyn.}, 86(3):1493--1534, 2016.

\bibitem{ponsioen2018automated}
S.~Ponsioen, T.~Pedergnana, and G.~Haller.
\newblock {Automated computation of autonomous spectral submanifolds for
  nonlinear modal analysis}.
\newblock {\em Journal of Sound and Vibration}, 420:269--295, 2018.

\bibitem{ponsioen2019analytic}
S.~Ponsioen, T.~Pedergnana, and G.~Haller.
\newblock Analytic prediction of isolated forced response curves from spectral
  submanifolds.
\newblock {\em Nonlinear Dynamics}, pages 1--19, 2019.

\bibitem{Jain2018}
S.~Jain, P.~Tiso, and G.~Haller.
\newblock {Exact nonlinear model reduction for a von K{\'a}rm{\'a}n beam:
  slow-fast decomposition and spectral submanifolds}.
\newblock {\em J. Sound Vib.}, 423:195--211, 2018.

\bibitem{Breunung2017}
T.~Breunung and G.~Haller.
\newblock {Explicit backbone curves from spectral submanifolds of forced-damped
  nonlinear mechanical systems}.
\newblock {\em Proc. R. Soc. A}, 474(2213):20180083, 2018.

\bibitem{Szalai2017}
R.~Szalai, D.~Ehrhardt, and G.~Haller.
\newblock {Nonlinear model identification and spectral submanifolds for
  multi-degree-of-freedom mechanical vibrations}.
\newblock In {\em Proc. R. Soc. A}, volume 473, page 20160759. The Royal
  Society, 2017.

\bibitem{kogelbauer2018rigorous}
F.~Kogelbauer and G.~Haller.
\newblock {Rigorous model reduction for a damped-forced nonlinear beam model:
  An infinite-dimensional analysis}.
\newblock {\em Journal of Nonlinear Science}, pages 1--42, 2018.

\bibitem{kryloff1947introduction}
N~Kryloff and N~Bogoliuboff.
\newblock {Introduction to Nonlinear Mechanics, Princeton Univ}.
\newblock {\em Press, Princeton, NJ}, 1947.

\bibitem{von2001harmonic}
G.~von Groll and D.J. Ewins.
\newblock {The harmonic balance method with arc-length continuation in
  rotor/stator contact problems}.
\newblock {\em Journal of sound and vibration}, 241(2):223--233, 2001.

\bibitem{cochelin2009high}
B.~Cochelin and C.~Vergez.
\newblock {A high order purely frequency-based harmonic balance formulation for
  continuation of periodic solutions}.
\newblock {\em Journal of sound and vibration}, 324(1-2):243--262, 2009.

\bibitem{MickensR}
R.E Mickens.
\newblock Comments on the method of harmonic balance.
\newblock {\em Journal of Sound and Vibration}, 94(3):456--460, 1984.

\bibitem{breunung2019when}
T.~Breunung and G.~Haller.
\newblock {When does a periodic response exist in a periodically forced
  multi-degree-of-freedom mechanical system?}
\newblock {\em Nonlinear Dyn.}, pages 1--20, 2019.

\bibitem{Peeters2009}
M.~Peeters, R.~Vigui{\'e}, G.~S{\'e}randour, G.~Kerschen, and J-C. Golinval.
\newblock {Nonlinear normal modes, Part II: Toward a practical computation
  using numerical continuation techniques}.
\newblock {\em Mech. Syst. Sig. Process.}, 23(1):195--216, 2009.

\bibitem{slater1996numerical}
J.C. Slater.
\newblock {A numerical method for determining nonlinear normal modes}.
\newblock {\em Nonlinear dynamics}, 10(1):19--30, 1996.

\bibitem{roberts1972two}
S.M. Roberts and J.S. Shipman.
\newblock {\em Two-point boundary value problems: shooting methods}.
\newblock American Elsevier Publishing Company Inc., New York, 1972.

\bibitem{Dankowicz2013}
H~Dankowicz and F~Schilder.
\newblock {Recipes for Continuation, Computational Science and Engineering}.
\newblock {\em Society for Industrial and Applied Mathematics, Philadelphia,
  PA}, 2013.

\bibitem{jain2018fast}
S.~Jain, T.~Breunung, and G.~Haller.
\newblock {Fast Computation of Steady-State Response for Nonlinear Vibrations
  of High-Degree-of-Freedom Systems}.
\newblock {\em arXiv preprint arXiv:1810.10103}, 2018.

\bibitem{Cabre2003}
X.~Cabr{\'e}, E.~Fontich, and R.~De~La~Llave.
\newblock {The parameterization method for invariant manifolds I: manifolds
  associated to non-resonant subspaces}.
\newblock {\em Indiana Univ. Math. J.}, 52(2):283--328, 2003.

\bibitem{Cabre2003b}
X.~Cabr{\'e}, E.~Fontich, and R.~De~La~Llave.
\newblock {The parameterization method for invariant manifolds II: regularity
  with respect to parameters}.
\newblock {\em Indiana Uni. math. j.}, 52(2):329--360, 2003.

\bibitem{Cabre2005}
X.~Cabr{\'e}, E.~Fontich, and R.~De~La~Llave.
\newblock {The parameterization method for invariant manifolds III: overview
  and applications}.
\newblock {\em J. of Diff. Eq.}, 218(2):444--515, 2005.

\bibitem{Malte2018}
M.~Krack and J.~Gross.
\newblock {The Harmonic Balance Method and its application to nonlinear
  vibrations: Introduction and current state of the art}.
\newblock {\em Mech. Syst. Sig. Process.}, 2018.

\bibitem{Reddy2013}
J.N. Reddy and P.~Mahaffey.
\newblock {Generalized beam theories accounting for von K{\'a}rm{\'a}n
  nonlinear strains with application to buckling}.
\newblock {\em J. of Coup. Sys. and Mult. Dyn.}, 1(1):120--134, 2013.

\bibitem{singiresu1995mechanical}
S.S. Rao.
\newblock {\em {Mechanical vibrations}}.
\newblock Addison Wesley, 1995.

\bibitem{cameron1989alternating}
T.M. Cameron and J.H. Griffin.
\newblock {An alternating frequency/time domain method for calculating the
  steady-state response of nonlinear dynamic systems}.
\newblock {\em J Appl Mech}, 56(1):149--154, 1989.

\bibitem{detroux2015harmonic}
T.~Detroux, L.~Renson, L.~Masset, and G.~Kerschen.
\newblock {The harmonic balance method for bifurcation analysis of large-scale
  nonlinear mechanical systems}.
\newblock {\em Computer Methods in Applied Mechanics and Engineering},
  296:18--38, 2015.

\bibitem{cardona1998fast}
A.~Cardona, A.~Lerusse, and M~G{\'e}radin.
\newblock {Fast Fourier nonlinear vibration analysis}.
\newblock {\em Comput. Mech.}, 22(2):128--142, 1998.

\bibitem{zhang2014harmonic}
Z.~Zhang and Y.~Chen.
\newblock {Harmonic balance method with alternating frequency/time domain
  technique for nonlinear dynamical system with fractional exponential}.
\newblock {\em Appl. Math. Mech.}, 35(4):423--436, 2014.

\bibitem{sinou2007non}
J.J. Sinou and A.W. Lees.
\newblock {A non-linear study of a cracked rotor}.
\newblock {\em Eur. J. Mech. A-Solids}, 26(1):152--170, 2007.

\bibitem{hill1886part}
G.W. Hill.
\newblock {On the part of the motion of the lunar perigee which is a function
  of the mean motions of the sun and moon}.
\newblock {\em Acta mathematica}, 8(1):1--36, 1886.

\end{thebibliography}

\end{document}